\documentclass[sts,
	reqno,
	% draft
]{imsart}

\usepackage{amsthm, amssymb, amsmath}
\usepackage{mathrsfs, mathtools}
\usepackage{euscript}
\usepackage{color}
\usepackage{tikz}
\usetikzlibrary{arrows}
\usepackage[utf8]{inputenc}
\usepackage[shortlabels]{enumitem}
\usepackage{url}
\usepackage{etex}
\usepackage{alltt}
\usepackage{memhfixc}
\usepackage{bbold}%
\usepackage{cases}
\usepackage[noadjust]{cite}
\definecolor{labelkey}{rgb}{0.0, 0.8, 0.3}
\usepackage[top=2.5cm,bottom=2.5cm,left=2.5cm,right=2.5cm,includeheadfoot,asymmetric]{geometry}
\usepackage{algorithm}
\usepackage{algpseudocode}
\usepackage{xfrac}
\usepackage{wrapfig}

\usepackage{custom}

% \usepackage[notref, notcite]{showkeys}
% \renewcommand*\showkeyslabelformat[1]{%
% \fbox{\normalfont\tiny\ttfamily#1}}

% \usepackage{natbib}
% \bibliographystyle{plainnat}
% \bibpunct{(}{)}{;}{a}{,}{,}

\numberwithin{equation}{section}
\setlength{\parindent}{0.5cm}

\usepackage{hyperref}
\hypersetup{
  colorlinks = true,
  urlcolor = blue, 
  linkcolor = blue,
  citecolor = red}

\declaretheorem[name=Remark, style=remark]{rmk}
\declaretheorem[name=Theorem]{thm}

\begin{document}

\begin{frontmatter}

	\title{SVGD as a kernelized Wasserstein gradient flow of the chi-squared divergence}
	\runtitle{SVGD as a kernelized Wasserstein gradient flow}
	\author{Sinho Chewi \hfill schewi@mit.edu \\
	Thibaut Le Gouic \hfill tlegouic@mit.edu \\
	Chen Lu \hfill chenl819@mit.edu \\
	Tyler Maunu \hfill maunut@mit.edu \\
	Philippe Rigollet \hfill rigollet@mit.edu}

% 	\author{ \fnms{Sinho} \snm{Chewi}\thanksref{}\ead[label=sc]{schewi@mit.edu},
% 		\fnms{Tyler} \snm{Maunu}\ead[label=tm]{maunut@mit.edu},
% 		\fnms{Philippe} \snm{Rigollet}\ead[label=rigollet]{rigollet@math.mit.edu},
% 			~and~
% 		\fnms{Austin J.} \snm{Stromme}\ead[label=ajs]{astromme@mit.edu}
% 	}
% 	%\affiliation{Central Marseille, Higher School of Economics and Massachusetts Institute of Technology}

% %	\thankstext{t2}{Supported by NSF awards IIS-1838071, DMS-1712596 and DMS-TRIPODS-1740751; ONR grant N00014-17- 1-2147 and grant 2018-182642 from the Chan Zuckerberg Initiative DAF.}

	\address{{Department of Mathematics} \\
		{Massachusetts Institute of Technology}\\
		{77 Massachusetts Avenue,}\\
		{Cambridge, MA 02139-4307, USA}
		 %\printead{sc}
	}
	
% 		\address{{Tyler Maunu}\\
% 		{Department of Mathematics} \\
% 		{Massachusetts Institute of Technology}\\
% 		{77 Massachusetts Avenue,}\\
% 		{Cambridge, MA 02139-4307, USA}\\
% 		 \printead{tm}
% 	}

% 	\address{{Philippe Rigollet}\\
% 		{Department of Mathematics} \\
% 		{Massachusetts Institute of Technology}\\
% 		{77 Massachusetts Avenue,}\\
% 		{Cambridge, MA 02139-4307, USA}\\
% 		 \printead{rigollet}
% 	}
% 	\address{{Austin Stromme}\\
% 		{Department of EECS} \\
% 		{Massachusetts Institute of Technology}\\
% 		{77 Massachusetts Avenue,}\\
% 		{Cambridge, MA 02139-4307, USA}\\
% 		\printead{ajs}
% 	}

\runauthor{Chewi et al.}

\begin{abstract}
Stein Variational Gradient Descent (SVGD), a popular sampling algorithm, is often described as the kernelized gradient flow for the Kullback-Leibler divergence in the geometry of optimal transport. We introduce a new perspective on SVGD that instead views SVGD as the (kernelized) gradient flow of the chi-squared divergence which, we show, exhibits a strong form of uniform exponential ergodicity  under conditions as weak as a Poincar\'e inequality. This perspective leads us to propose an alternative to SVGD, called Laplacian Adjusted Wasserstein Gradient Descent (LAWGD), that can be implemented from the spectral decomposition of the Laplacian operator associated with the target density. We show that LAWGD exhibits strong convergence guarantees and good practical performance.
\end{abstract}

%\ndpr{rewrite}

	%\begin{keyword}[class=AMS]
	%	\kwd[Primary ]{}
	%	%
	%	\kwd[; secondary ]{}
	%\end{keyword}
	%\begin{keyword}[class=KWD]
	%\end{keyword}

\end{frontmatter}

\section{Introduction}

The seminal paper of Jordan, Kinderlehrer, and Otto~\cite{jordan1998variational} has profoundly reshaped our understanding of sampling algorithms. What is now commonly known as the \emph{JKO scheme} interprets the evolution of marginal distributions of a Langevin diffusion as a gradient flow  of a Kullback-Leibler (KL) divergence over the Wasserstein space of probability measures. This optimization perspective on Markov Chain Monte Carlo (MCMC)  has not only renewed our understanding of algorithms based on Langevin diffusions~\cite{Dal17, Ber18, cheng2018langevin, Wib18, durmus2019lmcconvex, vempala2019langevin}, but has also fueled the discovery of new MCMC algorithms inspired by the diverse and powerful optimization toolbox~\cite{martin2012newtonmcmc, simsekli2016quasinewtonlangevin, cheng2017underdamped, Ber18, hsieh2018mirrored, Wib18, ma2019there, Wib19prox, chewi2020exponential, DalRiou2020,  zhang2020mirror}.

The Unadjusted Langevin Algorithm (ULA)~\cite{dalalyan2017theoretical, durmus2017nonasymptoticlangevin} is the most common discretization of the Wasserstein gradient flow for the KL divergence, but it is unclear whether it is the most effective one. In fact, ULA is asymptotically biased, which results in slow convergence and often requires ad-hoc adjustments~\cite{dwivedi2019log}. To overcome this limitation, various methods that track the Wasserstein gradient flow more closely have been recently developed~\cite{Ber18, Wib18, salim2020proximal}.

Let $F$ denote a functional over the Wasserstein space of distributions. The Wasserstein gradient flow of $F$ may be described as the deterministic and time-inhomogeneous Markov process ${(X_t)}_{t\ge 0}$ started at a random variable $X_0 \sim \mu_0$ and evolving according to  $\dot{X}_t= -[\nabla_{W_2}F(\mu_t)](X_t)$, where $\mu_t$ denotes the distribution of $X_t$. Here $[\nabla_{W_2}F(\mu)](\cdot): \R^d \to \R^d$ is the Wasserstein gradient of $F$ at $\mu$. If $F(\mu)=\KL{\mu}$, where $\pi \propto \e^{-V}$ is a given target distribution on $\R^d$, it is known~\cite{ambrosio2008gradient, villani2009ot, santambrogio2017euclidean} that $\nabla_{W_2}F(\mu)= \nabla \ln(\D \mu/\D \pi)$.
% ,
% thus leading to the following gradient flow (see Sections~\ref{sec:wgf} and~\ref{sec:svgd_as_kl}) for more details),
% \begin{equation}
% \label{eq:gradkl}    
% \nabla_{W_2}F(\mu)= \nabla \ln\big(\frac{\D \mu}{\D \pi}\big)
% \end{equation}
Therefore, a natural discretization of the Wasserstein gradient flow with step size $h > 0$, albeit one that cannot be implemented since it depends on the distribution $\mu_{t}$ of $X_{t}$, is:
$$
X_{t+1} =X_t - h \nabla \ln\big(\frac{\D \mu_t}{\D \pi}(X_t)\big), \qquad t=0,1, 2, \ldots\,.
$$

While $\mu_t$ can, in principle, be estimated by evolving a large number of particles $X_t^{[1]}, \ldots, X_t^{[N]}$, estimation of $\mu_t$ is hindered by the curse of dimensionality and this approach still faces significant computational challenges despite attempts to improve the original JKO scheme~\cite{salim2020proximal, wang2020informationnewton}.

A major advance in this direction was achieved by allowing for \emph{approximate}  Wasserstein gradients. More specifically, Stein Variational Gradient Descent (SVGD), recently proposed by \cite{liu2016stein} (see Section~\ref{sec:svgd} for more details), consists in replacing $\nabla_{W_2}F(\mu)$ by its image $\cK_\mu \nabla_{W_2}F(\mu)$ under the integral operator  $\cK_\mu: L^2(\mu) \to L^2(\mu)$ associated to a chosen kernel $K:\R^d\times \R^d \to \R$ and defined by $\cK_\mu f(x) := \int K(x,y) f(y) \, \D \mu(y)$ for $f \in L^2 (\mu)$.
This leads to the following process:
\begin{equation}
\label{eq:SVGD_p}
\tag{$\msf{SVGD_{p}}$}
\dot{X}_t= -[\cK_{\mu_t}\nabla_{W_2}F(\mu_t)](X_t)\,,
\end{equation}
where we apply the integral operator $\mc K_{\mu_t}$ individually to each coordinate of the Wasserstein gradient.
% The index $\msf{p}$ in~\eqref{eq:SVGD_p} indicates that this formulation of SVGD describes the evolution of a {\sf p}article $X_t$ under SVGD. This formulation is employed to implement SVGD and is to be contrasted with~\eqref{eq:SVGD_d} which describes the evolution of the {\sf d}istribution of $X_t$, often more useful for analysis. \sinho{This sentence feels out of place, since $\msf{SVGD_d}$ is not introduced until later. Also it doesn't flow into the next sentence}
In turn, this \emph{kernelization trick} overcomes most of the above computational bottleneck. 
Building on this perspective,~\cite{duncan2019geometrysvgd} introduced a new geometry, different from the Wasserstein geometry and which they call the \emph{Stein geometry}, in which~\eqref{eq:SVGD_p} becomes the gradient flow of the KL divergence. However, despite this recent advance, the theoretical properties of SVGD as a sampling algorithm as well as guidelines for the choice of the kernel $K$ are still largely unexplored.

In this work, we  revisit the above view of SVGD as a kernelized gradient flow of the KL divergence over Wasserstein space that was put forward in~\cite{liu2017stein}.

\medskip

\textbf{Our contributions}. 
We introduce, in Section~\ref{sec:svgd_as_chi_sq}, a new perspective on SVGD by viewing it as kernelized gradient flow of the chi-squared divergence rather than the KL divergence. This perspective is fruitful in two ways.
First, it uses a single integral operator $\cK_\pi$---as opposed to~\eqref{eq:SVGD_p}, which requires a family of integral operators $\cK_\mu$, $\mu\ll\pi$---providing a conceptually clear guideline for choosing $K$, namely: $K$ should be chosen to make $\cK_\pi$ approximately equal to the identity operator.
%First, it allows us to formulate an ideal choice for $K$ whereby a single integral operator $\cK_\pi$---as opposed to all integral operators $\cK_\mu$, $\mu \ll \pi$---is the identity operator.
Second, under the idealized choice $\cK_\pi = \id$, we show that this gradient flow converges exponentially fast in KL divergence as soon as the target distribution $\pi$ satisfies a Poincar\'e inequality. In fact, our results are stronger than exponential convergence and they highlight \emph{strong uniform ergodicity}: the gradient flow forgets the initial distribution after a finite time that is at most half of the Poincar\'e constant. 
To establish this exponential convergence under a relatively weak condition (Poincar\'e inequality), we employ the following technique. While the gradient flow aims at minimizing the chi-squared divergence by following the curve in Wasserstein space with steepest descent, we do not track its progress with the objective function itself, the chi-squared divergence, but instead we track it with the KL divergence. This is in a sense dual to argument employed in~\cite{chewi2020exponential}, where the chi-squared divergence is used to track the progress of a gradient flow on the KL divergence. A more standard analysis relying on \L{}ojasiewicz inequalities also yields rates of convergence on the chi-squared divergence under stronger assumptions such as a log-Sobolev inequality, and log-concavity.
% and we also provide convergence rates for the chi-squared divergence itself under various assumptions such as a log-Sobolev inequality, and log-concavity using a more standard analysis relying on \L{}ojasiewicz inequalities.
These results establish the first finite-time theoretical guarantees for SVGD in an idealized setting. 
% \sinho{Can we really say this? Can't I take the old point of view on SVGD, replace $\mc K_{\mu_t}$ by $\id$ as an ``idealized setting'', and get theoretical guarantees for SVGD (i.e., exactly the same guarantees as Langevin) }
%Other examples where the choice of an appropriate Lyapunov function streamlines the analysis can be seen in \red{ADD REF}.

\begin{wrapfigure}[15]{r}{.31\textwidth}
\vspace{-0.7cm}
    \centering
    \includegraphics[width = 0.31\textwidth]{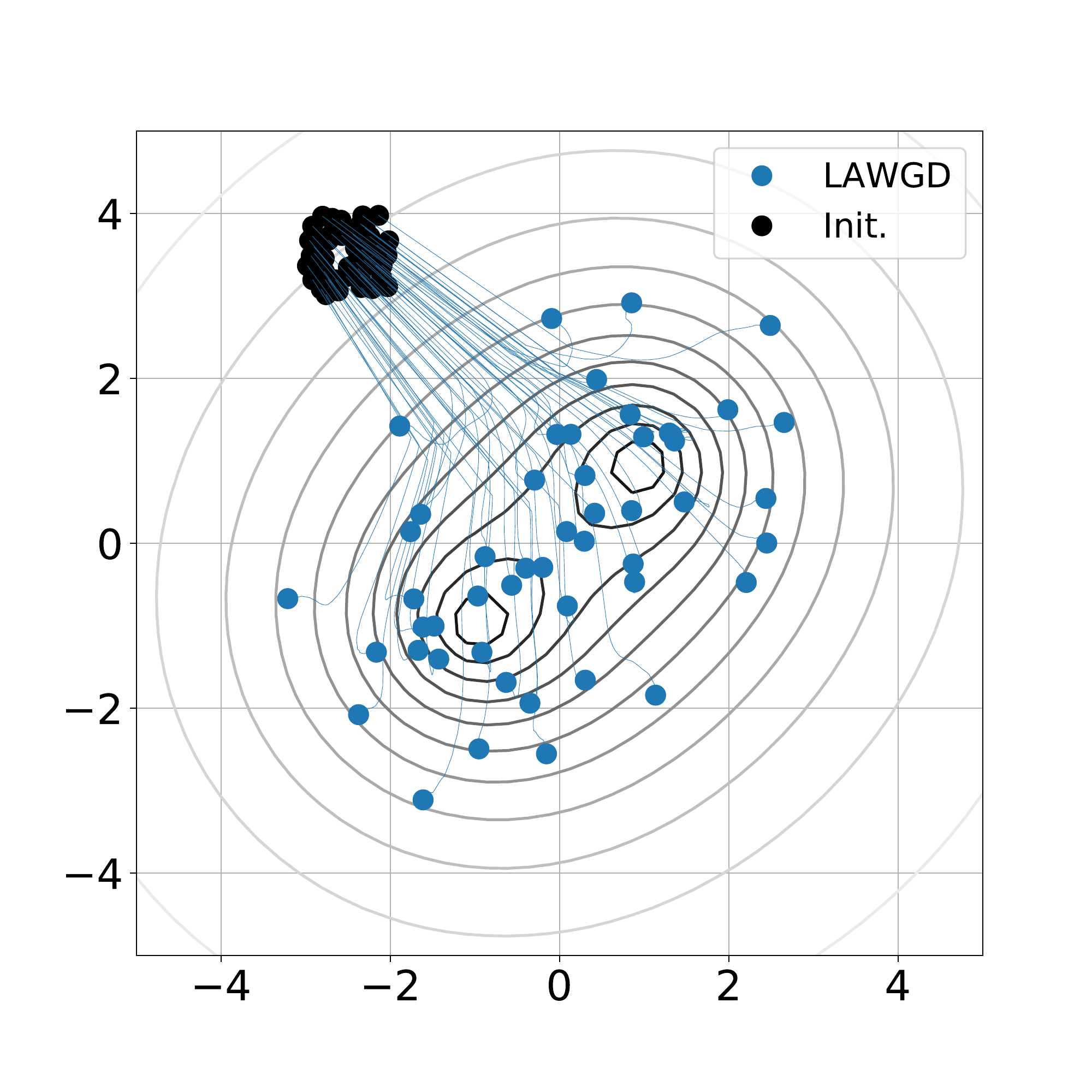}
    \caption{Sampling from a mixture of two 2D Gaussians with \ref{eq:msvgd}. See Appendix~\ref{append:numerics}.}
    \label{fig:gaussmix2d}
\end{wrapfigure}

Beyond providing a better understanding of SVGD, our novel perspective is instrumental in the development of a new sampling algorithm, which we call Laplacian Adjusted Wasserstein Gradient Descent~\eqref{eq:msvgd} and present in Section~\ref{sec:KGD}. Although LAWGD is challenging to implement in high dimensions, we show that it possesses a striking theoretical property: assuming that the target distribution $\pi$ satisfies a Poincar\'e inequality, LAWGD converges exponentially fast, with \emph{no} dependence on the Poincar\'e constant. This scale invariance has been recently demonstrated for the Newton-Langevin diffusion~\cite{chewi2020exponential}, but under the additional assumption that $\pi$ is log-concave. A successful implementation of LAWGD hinges on the spectral decomposition of a certain differential operator which is within reach of modern PDE solvers. As a proof of concept, we show that LAWGD performs well in one or two dimensions using a na\"{i}ve finite differences method and leave the question of applying more sophisticated numerical solvers open for future research.

\medskip

\textbf{Related work}.
Since its introduction in \cite{liu2016stein}, a number of variants of SVGD have been considered. They include a stochastic version~\cite{li2019stochastic}, a version that approximates the Newton direction in Wasserstein space~\cite{detommaso2018stein}, a version that uses matrix kernels~\cite{wang2019stein}, an accelerated version~\cite{liu2019understanding}, and a hybrid with Langevin~\cite{zhang2018stochastic}. Several works have studied theoretical properties of SVGD, including its interpretation as a gradient flow under a modified geometry~\cite{liu2017stein,duncan2019geometrysvgd}, and its asymptotic convergence~\cite{lu2019scaling}.

\medskip

\textbf{Notation}. In this paper, all probability measures are assumed to have densities w.r.t.\ Lebesgue measure; therefore, we frequently abuse notation by identifying a probability measure with its Lebesgue density. For a differentiable kernel $K : \R^d\times\R^d\to\R$, we denote by $\nabla_1 K :\R^d\times\R^d\to\R^d$ (resp.\ $\nabla_2 K$) the gradient of the kernel w.r.t.\ the first (resp.\ second) argument. When describing particle algorithms, we use a subscript to denote the time index and brackets to denote the particle index, i.e., $X_t^{[i]}$ refers to the $i$th particle at time (or iteration number) $t$.

\section{SVGD as a kernelized Wasserstein gradient flow}
\label{sec:svgd}
\subsection{Wasserstein gradient flows}\label{sec:wgf}

In this section, we review the theory of gradient flows on the space $\mc P_{2,\rm ac}(\R^d)$ of probability measures absolutely continuous w.r.t.\ Lebesgue measure and possessing a finite second moment, equipped with the $2$-Wasserstein metric $W_2$. We refer readers to~\cite{villani2003topics, santambrogio2015ot, santambrogio2017euclidean} for introductory treatments of optimal transport, and to~\cite{ambrosio2008gradient, villani2009ot} for detailed treatments of Wasserstein gradient flows.

% Formally, we can view $\mc P_{2,\rm ac}(\R^d)$ as a Riemannian manifold using the following prescription: given $\mu \in \mc P_{2,\rm ac}(\R^d)$, the \emph{tangent space} to $\mc P_{2,\rm ac}(\R^d)$ at $\mu$ is defined to be $T_\mu \mc P_{2,\rm ac}(\R^d) := \overline{\{\nabla \varphi : \varphi \in \mc C_{\rm c}^\infty(\R^d)\}}^{L^2(\mu)}$.

Let $F : \mc P_{2,\rm ac}(\R^d) \to \R\cup\{\infty\}$ be a functional defined on Wasserstein space. We say that a curve ${(\mu_t)}_{t\ge 0}$ of probability measures is a \emph{Wasserstein gradient flow} for the functional $F$ if it satisfies
\begin{align}\label{eq:wgf}
    \partial_t \mu_t
    &= \divergence\bigl(\mu_t \nabla_{W_2} F(\mu_t)\bigr)
\end{align}
in a weak sense. Here, $\nabla_{W_2} F(\mu) := \nabla \delta F(\mu)$ is the Wasserstein gradient of the functional $F$ at $\mu$, where $\delta F(\mu) : \R^d\to\R$ is the \emph{first variation} of $F$ at $\mu$, defined by
\begin{align*}
    \lim_{\varepsilon\to 0} \frac{F(\mu+\varepsilon \xi)-F(\mu)}{\varepsilon}
    = \int \delta F(\mu) \, \D \xi, \qquad\text{for all}~\xi~\text{with}~\int \D \xi = 0,
\end{align*}
and $\nabla$ denotes the usual (Euclidean) gradient. Hence, the Wasserstein gradient, at each $\mu \in  \mc P_{2,\rm ac}(\R^d)$, is a map from $\R^d$ to $\R^d$.

Using the \emph{continuity equation}, we can give an Eulerian interpretation to the evolution equation~\eqref{eq:wgf} (see~\cite[\S 4]{santambrogio2015ot} and~\cite[\S 8]{ambrosio2008gradient}). Given a family of vector fields ${(v_t)}_{t\ge 0}$, let ${(X_t)}_{t\ge 0}$ be a curve in $\R^d$ with random initial point $X_0 \sim \mu_0$, and such that ${(X_t)}_{t\ge 0}$ is an integral curve of the vector fields ${(v_t)}_{t\ge 0}$, that is, $\dot X_t = v_t(X_t)$. If we let $\mu_t$ denote the law of $X_t$, then ${(\mu_t)}_{t\ge 0}$ evolves according to the \emph{continuity equation}
\begin{align}\label{eq:continuity_eq}
    \partial_t \mu_t
    &= -\divergence(\mu_t v_t).
\end{align}
Comparing~\eqref{eq:wgf} and~\eqref{eq:continuity_eq}, we see that~\eqref{eq:wgf} describes the evolution of the marginal law ${(\mu_t)}_{t\ge 0}$ of the curve ${(X_t)}_{t\ge 0}$ with $X_0 \sim \mu_0$ and $\dot X_t = -[\nabla_{W_2} F(\mu_t)](X_t)$.

Wasserstein calculus provides the following (formal) calculation rule: the Wasserstein gradient flow ${(\mu_t)}_{t\ge 0}$ for the functional $F$ dissipates $F$ at the rate $\partial_t F(\mu_t) = -\E_{\mu_t}[\norm{\nabla_{W_2}F(\mu_t)}^2]$. More generally, for a curve ${(\mu_t)}_{t\ge 0}$ evolving according to the continuity equation~\eqref{eq:continuity_eq}, the time-derivative of $F$ is given by $\partial_t F(\mu_t) = \E_{\mu_t}\langle \nabla_{W_2} F(\mu_t), v_t \rangle$.

In this paper, we are primarily concerned with two functionals: the Kullback-Leibler (KL) divergence $D_{\rm KL}(\cdot \mmid \pi)$, and the chi-squared divergence $\chi^2(\cdot \mmid \pi)$ (see, e.g., \cite{tsybakov2009nonparametric}). It is a standard exercise~\cite{ambrosio2008gradient, santambrogio2015ot} to check that Wasserstein gradients of these functionals are, respectively,
\begin{align}\label{eq:w2_grads}
    \bigl(\nabla_{W_2} D_{\rm KL}(\cdot \mmid \pi)\bigr)(\mu) = \nabla \ln \frac{\D \mu}{\D \pi}, \qquad \bigl(\nabla_{W_2} \chi^2(\cdot \mmid \pi)\bigr)(\mu) = 2\nabla \frac{\D \mu}{\D \pi}.
\end{align}

\subsection{SVGD as a kernelized gradient flow of the KL divergence}\label{sec:svgd_as_kl}

%For the Kullback-Leibler (KL) divergence $F := D_{\rm KL}(\cdot \mmid \pi)$, the Wasserstein gradient at $\mu$ is $\nabla_{W_2} F(\mu) = \nabla \ln \frac{\D\mu}{\D\pi}$.

% Recall that $\pi \propto e^{-V}$. We get from~\eqref{eq:w2_grads}, that the Wasserstein gradient flow for the KL divergence is given by
% \begin{align*}
%     \partial_t \mu_t
%     &= \divergence\bigl(\mu_t \nabla \ln \frac{\D\mu_t}{\D\pi} \bigr)
%     = \divergence\bigl( \mu_t \, (\nabla V + \nabla \ln \mu_t)\bigr).
% \end{align*}
% This PDE is known as the \emph{Fokker-Planck equation} \sinho{cite}, and it is well-known that it describes the evolution of the law of the \emph{Langevin diffusion}. We may summarize this by saying that \emph{the Langevin diffusion is the Wasserstein gradient flow of the KL divergence}~\cite{jordan1998variational}.

SVGD\footnote{Throughout this paper, we call SVGD the generalization of the original method of~\cite{liu2016stein, liu2017stein} that was introduced in~\cite{wang2019stein}.} is achieved  by replacing the Wasserstein gradient $\nabla \ln (\D\mu_t/\D \pi)$ of the KL divergence with $\cK_{\mu_t} \nabla \ln (\D\mu_t/\D \pi)$, leading to the particle evolution equation~\eqref{eq:SVGD_p}.

Recalling that $\pi \propto e^{-V}$, we get
%\tlg{explain that we're kernilizing each coordinate of the gradient?}
\begin{align}
    \cK_{\mu_t} \nabla \ln \frac{\D\mu_t}{\D\pi}(x)
    &:= \int K(x,\cdot) \nabla \ln \frac{\D\mu_t}{\D\pi} \, \D \mu_t
    % &= \int K(x,y) \nabla V(y) \, \D \mu_t(y) + \int K(x,y) \nabla \mu_t(y) \, \D y \\
    = \int K(x,\cdot) \nabla V \, \D \mu_t - \int \nabla_2 K(x,\cdot) \, \D \mu_t\,,\label{eq:svgd_gradient}
\end{align}
where, in the second identity, we used integration by parts. This expression shows that rather than having to estimate the distribution $\mu_t$, it is sufficient to estimate the expectation $\int \nabla_2 K(x,\cdot) \, \D \mu_t$. This is the key to the computational tractability of SVGD. Indeed, the kernelized gradient flow can implemented by drawing $N$ particles $X_0^{[1]},\dotsc,X_0^{[N]} \simiid \mu_0$ and following the coupled dynamics
\begin{align*}
    \dot X_t^{[i]}
    &= -\cK_{\mu_t} \nabla \ln \frac{\D\mu_t}{\D\pi}(X_t^{[i]})
    = -\int K(X_t^{[i]},\cdot) \nabla V \, \D \mu_t + \int \nabla_2 K(X_t^{[i]},\cdot) \, \D \mu_t, \qquad i \in [N].
\end{align*}
With this, we can simply estimate the expectation with respect to $\mu_t$ with an average over all particles. Discretizing the resulting process in time, we obtain the SVGD algorithm:
% Observing, however, that the final expression in~\eqref{eq:svgd_gradient} involves expectations over the law $\mu_t$, we may approximate them via expectations over $\mu_{t,N}$, the empirical distribution of the particles $X_t[1],\dotsc,X_t[N]$. Discretizing the resulting process in time, we obtain the SVGD algorithm:
\begin{align}\label{eq:svgd_alg}
    X_{t+1}^{[i]}
    &= X_t^{[i]} - \frac{h}{N} \sum_{j=1}^N K(X_t^{[i]}, X_t^{[j]}) \nabla V(X_t^{[j]}) + \frac{h}{N} \sum_{j=1}^N \nabla_2 K(X_t^{[i]}, X_t^{[j]}), \qquad i \in [N].
\end{align}

\subsection{SVGD as a kernelized gradient flow of the chi-squared divergence}\label{sec:svgd_as_chi_sq}

Recall from Section~\ref{sec:wgf} that by the continuity equation, the particle evolution equation~\eqref{eq:SVGD_p} translates into the following PDE that describes the evolution of the distribution \(\mu_t\) of $X_t$:
\begin{equation}
\tag{$\msf{SVGD_d}$}
    \label{eq:SVGD_d}
      \partial_t \mu_t
    = \divergence\bigl(\mu_t \cK_{\mu_t} \nabla \ln \frac{\D \mu_t}{\D \pi}\bigr).
\end{equation}

We make the simple observation that
\begin{align*}
    \cK_{\mu_t} \nabla \ln \frac{\D\mu_t}{\D\pi}(x)
    &= \int K(x,y) \nabla \ln \frac{\D\mu_t}{\D\pi}(y) \, \D \mu_t(y)
    = \int K(x,y) \nabla \frac{\D\mu_t}{\D\pi}(y) \, \D \pi(y)
    = \cK_\pi \nabla \frac{\D\mu_t}{\D\pi}(x).
\end{align*}
Thus, the continuous-dynamics of SVGD, as given in~\eqref{eq:SVGD_d}, can equivalently be expressed as
\begin{align}\label{eq:svgd}\tag{$\msf{SVGD}$}
    \partial_t \mu_t
    &= \divergence\bigl(\mu_t \cK_\pi \nabla \frac{\D\mu_t}{\D\pi}\bigr).
\end{align}

To interpret this equation, we recall that the Wasserstein gradient of the chi-squared divergence $\chi^2(\cdot \mmid \pi)$ at $\mu$ is $2\nabla(\D\mu/\D\pi)$ (by~\eqref{eq:w2_grads}), so
% we recall that the  {chi-squared divergence} is defined by
% \[
% \chi^2(\mu\mmid\pi):=\int \bigl(\frac{\D\mu}{\D\pi}\bigr)^2 \, \D\mu - 1, \qquad\text{if}~\mu\ll \pi,
% \]
% and $\chi^2(\mu \mmid \pi) := \infty$ otherwise.
% %For the functional $F = \chi^2(\cdot \mmid \pi)$, we can compute the Wasserstein gradient of $F$ at $\mu$ to be $\nabla_{W_2} F(\mu) = 2 \nabla \frac{\D\mu}{\D\pi}$. Thus,
% From~\eqref{eq:w2_grads},
the gradient flow for the chi-squared divergence is
\begin{align}\label{eq:csf}\tag{$\msf{CSF}$}
    \partial_t \mu_t
    &= 2 \divergence\bigl(\mu_t \nabla \frac{\D\mu_t}{\D\pi}\bigr).
\end{align}
Comparing~\eqref{eq:svgd} and~\eqref{eq:csf}, we see that (up to a factor of $2$), \ref{eq:svgd} can be understood as the flow obtained by replacing the gradient of the chi-squared divergence, $\nabla (\D \mu/\D \pi)$, by $\cK_\pi \nabla  (\D \mu/\D \pi)$.

Although~\eqref{eq:SVGD_d} and~\eqref{eq:svgd} are equivalent ways of expressing the same dynamics, the formulation of~\eqref{eq:svgd} presents a significant advantage: it involves a kernel integral operator $\cK_{\pi}$ that does not change with time and depends only on the target distribution $\pi$.

\section{Chi-squared gradient flow}\label{sec:csf}
% The chi-squared divergence to a reference measure $\pi$ is a well defined function on the $2$-Wasserstein space, so we can use Section~\ref{sec:wgf} to define its Wasserstein gradient flow.
% Given the formulation
% \[
% \chi^2(\mu\mmid\pi)=\int \bigl(\frac{\D\mu}{\D\pi}\bigr)^2 \, \D\mu - 1,
% \]
% the Wasserstein gradient of $\chi^2(\cdot\mmid\pi)$ at $\mu$ is given by 
% \[
% \nabla_{W_2}\chi^2(\mu\mmid\pi)=2\nabla \frac{\D\mu}{\D\pi}.
% \]
% Hence, the gradient flow $(\mu_t)_{t\ge0}$ of $\chi^2(\cdot\mmid\pi)$ satisfies the continuity equation
% \[
% \partial_t \mu_t=2\divergence(\mu_t\nabla \frac{\D\mu}{\D\pi}).
% \]

In this section, study the idealized case where $\cK_\pi$ taken to be the identity operator. In this case, \eqref{eq:svgd} reduces to the gradient flow \ref{eq:csf}.
% 
% motivated by the observations of Section~\ref{sec:svgd_as_chi_sq}, we study the convergence guarantees for \ref{eq:csf}, which can be viewed as \eqref{eq:svgd} with the kernel operator $\cK_\pi$ taken to be the identity operator. 
The existence, uniqueness, and regularity of this flow are studied in~\cite{ohta2011generalizedentropies, ohta2013generalizedentropiesii} and~\cite[Theorem 11.2.1]{ambrosio2008gradient}.

The rate of convergence of the gradient flow of the KL divergence is closely related to two functional inequalities: the Poincar\'e inequality controls the rate of exponential convergence in chi-squared divergence (\cite[Theorem 4.4]{pavliotis2014stochastic}, ~\cite{chewi2020exponential}) while a log-Sobolev inequality characterizes the rate of exponential convergence of the KL divergence~\cite[Theorem 5.2.1]{bakry2014markov}.
In this section, we show that these inequalities also guarantee exponential rates of convergence of \ref{eq:csf}.

Recall that $\pi$ satisfies a \emph{Poincar\'e inequality} with constant $C_{\msf P}$ if
\begin{align}\label{eq:poincare}\tag{$\msf{P}$}
\var_\pi f \le C_{\msf P}\E_\pi[\norm{\nabla f}^2], \qquad\text{for all locally Lipschitz}~f \in L^2(\pi),
\end{align}
while $\pi$ satisfies a \emph{log-Sobolev inequality} with constant $C_{\msf{LSI}}$
\begin{align}\label{eq:lsi}\tag{$\msf{LSI}$}
\on{ent}_\pi(f^2) := \E_\pi[f^2 \ln(f^2)] - \E_\pi[f^2] \ln \E_\pi[f^2] \le 2C_{\msf{LSI}} \E_\pi[\norm{\nabla f}^2]
\end{align}
for all locally Lipschitz $f$ for which $\on{ent}_\pi(f^2) < \infty$.

We briefly review some facts regarding the strength of these assumptions.
It is standard that the log-Sobolev inequality is stronger than the Poincar\'e inequality: \eqref{eq:lsi} implies~\eqref{eq:poincare} with constant $C_{\msf P} \le C_{\msf{LSI}}$. In turn, if $\pi$ is \emph{$\alpha$-strongly log-concave}, i.e.\ $\nabla^2 V \succeq \alpha I_d$, then it implies the validity of~\eqref{eq:lsi} with $C_{\msf{LSI}} \le 1/\alpha$, and thus a Poincar\'e inequality holds too. However, a Poincar\'e inequality is in general much weaker than strong log-concavity. For instance, if $\lambda_\pi^2$ denotes the largest eigenvalue of the covariance matrix of $\pi$, then it is currently known that $\pi$ satisfies a Poincar\'e inequality as soon as it is log-concave, with $C_{\msf P} \le C(d) \lambda_\pi^2$, where $C(d)$ is a dimensional constant~\cite{bobkovIsoperimetricAnalyticInequalities1999, alonso2015kls, leevempala2017poincare}, and the well-known \emph{Kannan-Lov\'asz-Simonovitz \emph{(KLS)} conjecture}~\cite{kls1995} asserts that $C(d)$ does not actually depend on the dimension.

Our first result shows that a Poincar\'e inequality  suffices to establish exponential decay of the KL divergence along \ref{eq:csf}. In fact, we establish a remarkable property, which we call \emph{strong uniform ergodicity}: under a Poincar\'e inequality,  \ref{eq:csf}  forgets its initial distribution after a time of no more than $C_{\msf P}/2$. Uniform ergodicity is central in the theory of Markov processes~\cite[Ch.\ 16]{MeyTwe09} but is often limited to compact state spaces. Moreover, this theory largely focuses on total variation, so the distance from the initial distribution to the target distribution is trivially bounded by $1$.

\begin{thm}\label{thm:csf_kl_conv}
Assume that $\pi$ satisfies a Poincar\'e inequality~\eqref{eq:poincare} with constant $C_{\msf P} > 0$ and let ${(\mu_t)}_{t\ge 0}$ denote the law of \ref{eq:csf}. Assume that $\chi^2(\mu_0 \mmid \pi) < \infty$. Then,
\begin{align}
\label{eq:expCSF}
    D_{\rm KL}(\mu_t \mmid \pi) \le D_{\rm KL}(\mu_0 \mmid \pi) \, e^{-\frac{2t}{C_{\msf P}}}\,, \qquad \forall\ t \ge 0.
\end{align}
In fact, a stronger convergence result holds:
\begin{align}
\label{eq:exp+csf}
D_{\rm KL}(\mu_t\mmid\pi)\le \bigl(D_{\rm KL}(\mu_0 \mmid \pi) \wedge 2\bigr) \, \e^{-\frac{2t}{C_{\msf P}}} \, \,, \qquad \forall\ t \ge \frac{C_{\msf P}}{2}.
\end{align}
\end{thm}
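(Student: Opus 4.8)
The plan is to track the Kullback--Leibler divergence along \ref{eq:csf}, even though \ref{eq:csf} is the steepest-descent flow for $\chi^2(\cdot\mmid\pi)$ rather than for $D_{\rm KL}(\cdot\mmid\pi)$; this mismatch is precisely what lets a mere Poincar\'e inequality drive the analysis. Write $\rho_t := \D\mu_t/\D\pi$. Since \ref{eq:csf} is the continuity equation with velocity field $v_t = -2\nabla\rho_t$, the calculus rule $\partial_t F(\mu_t)=\E_{\mu_t}\langle\nabla_{W_2}F(\mu_t),v_t\rangle$ from Section~\ref{sec:wgf}, applied to $F=D_{\rm KL}(\cdot\mmid\pi)$ (whose Wasserstein gradient is $\nabla\ln\rho_t$ by~\eqref{eq:w2_grads}), gives
\begin{align*}
\partial_t D_{\rm KL}(\mu_t\mmid\pi)
= \E_{\mu_t}\langle\nabla\ln\rho_t,\,-2\nabla\rho_t\rangle
= -2\int \frac{\norm{\nabla\rho_t}^2}{\rho_t}\,\D\mu_t
= -2\,\E_\pi[\norm{\nabla\rho_t}^2],
\end{align*}
where the last step uses $\D\mu_t=\rho_t\,\D\pi$.

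First I would prove~\eqref{eq:expCSF}. Applying the Poincar\'e inequality~\eqref{eq:poincare} to $f=\rho_t$ and noting $\E_\pi\rho_t=1$, so that $\var_\pi\rho_t=\chi^2(\mu_t\mmid\pi)$, yields $\E_\pi[\norm{\nabla\rho_t}^2]\ge \chi^2(\mu_t\mmid\pi)/C_{\msf P}$ and hence
\begin{align*}
\partial_t D_{\rm KL}(\mu_t\mmid\pi) \le -\frac{2}{C_{\msf P}}\,\chi^2(\mu_t\mmid\pi).
\end{align*}
Combining this with the elementary bound $D_{\rm KL}(\mu_t\mmid\pi)\le\chi^2(\mu_t\mmid\pi)$ (from $\ln x\le x-1$) and integrating via Gr\"onwall gives~\eqref{eq:expCSF}.

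The stronger statement~\eqref{eq:exp+csf} comes from replacing $D_{\rm KL}\le\chi^2$ by the sharper inequality $\chi^2(\mu_t\mmid\pi)\ge \exp(D_{\rm KL}(\mu_t\mmid\pi))-1$, which follows from Jensen's inequality applied to $D_{\rm KL}(\mu_t\mmid\pi)=\E_{\mu_t}[\ln\rho_t]\le\ln\E_{\mu_t}[\rho_t]=\ln(1+\chi^2(\mu_t\mmid\pi))$. Setting $u(t):=D_{\rm KL}(\mu_t\mmid\pi)$ and $\beta:=2/C_{\msf P}$, the differential inequality above becomes $u'\le -\beta(\e^{u}-1)$. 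The autonomous comparison ODE $w'=-\beta(\e^w-1)$ integrates explicitly, and its maximal solution---the one blowing up as $t\to 0^+$---is $w_\infty(t)=-\ln(1-\e^{-\beta t})$. A shift-and-compare argument (for any $s>0$, $w_\infty(\cdot+s)$ solves the ODE and dominates $u$ at time $0$, then let $s\to 0^+$) shows $u(t)\le w_\infty(t)$ for every $t>0$, \emph{independently of the initial condition}---this is the source of the forgetting phenomenon. It then remains to check that $-\ln(1-\e^{-\beta t})\le 2\e^{-\beta t}$ once $\beta t\ge 1$ (equivalently $t\ge C_{\msf P}/2$); writing $x=\e^{-\beta t}\le\e^{-1}<2/3$, this reduces to the estimate $-\ln(1-x)\le x+\tfrac{x^2}{2(1-x)}\le 2x$. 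Taking the minimum of this bound with~\eqref{eq:expCSF} gives~\eqref{eq:exp+csf}.

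The derivative computations are formal and would need the existence/regularity theory cited in the text, together with the observation that $\chi^2(\mu_t\mmid\pi)$ is nonincreasing along the flow---so that $\rho_t\in L^2(\pi)$ for all $t$, justifying the use of~\eqref{eq:poincare}---given $\chi^2(\mu_0\mmid\pi)<\infty$. I expect the main conceptual obstacle to be the comparison-to-maximal-solution step: one must argue that the explicit solution starting ``from $+\infty$'' dominates $u$ for all positive times, which is exactly what upgrades the initialization-dependent rate of~\eqref{eq:expCSF} to the uniform, initialization-free decay asserted in~\eqref{eq:exp+csf}.
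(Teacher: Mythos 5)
Your proposal is correct and takes essentially the same route as the paper: the identical dissipation identity $\partial_t D_{\rm KL}(\mu_t \mmid \pi) = -2\E_\pi[\lVert \nabla(\D\mu_t/\D\pi)\rVert^2]$, the same application of the Poincar\'e inequality, and the same two comparisons $D_{\rm KL} \le \chi^2$ and $D_{\rm KL} \le \ln(1+\chi^2)$. Your comparison with the maximal solution $w_\infty(t) = -\ln(1-e^{-2t/C_{\msf P}})$ is precisely what the paper implements via the substitution $\psi(x) = 1-e^{-x}$, Gr\"onwall, and the bound $\psi \le 1$ (note $\psi(w_\infty(t)) = e^{-2t/C_{\msf P}}$), so the two arguments coincide up to presentation.
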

\begin{proof}
Given the Wasserstein gradients~\eqref{eq:w2_grads} in Section~\ref{sec:wgf}, we get that
% that the Wasserstein gradient of $D_{\rm KL}(\cdot\mmid\pi)$ at $\mu$ is given by $\nabla\ln (\D\mu/\D\pi)$ and the Wasserstein gradient of $\chi^2(\cdot\mmid\pi)$ at $\mu$ is given  $2\nabla(\D\mu/\D\pi)$.
% Therefore, 
${(\mu_t)}_{t\ge0}$ satisfies
\begin{align*}
\partial_tD_{\rm KL}(\mu_t\mmid\pi)
%&=-\esp_{\mu_t}\langle\nabla_{W_2}D_{KL}(\mu_t\mmid\pi),\nabla_{W_2}\chi^2(\mu_t\mmid\pi)\rangle\\
&=-2\E_{\mu_t}\bigl\langle\nabla\ln \frac{\D\mu_t}{\D\pi},\nabla \frac{\D\mu_t}{\D\pi}\bigr\rangle
=-2\E_{\pi}\bigl[\bigl\lVert \nabla \frac{\D\mu_t}{\D\pi} \bigr\rVert^2\bigr].
\end{align*}
Applying the Poincar\'e inequality~\eqref{eq:poincare} with $f=\D\mu_t/\D\pi-1$, we get
\[
\partial_tD_{\rm KL}(\mu_t\mmid\pi)\le -\frac{2}{C_{\msf P}}\chi^2(\mu_t\mmid\pi)\le-\frac{2}{C_{\msf P}}D_{\rm KL}(\mu_t\mmid\pi)\,,
\]
where, in the last inequality, we use the fact that $D_{\rm KL}(\cdot \mmid\pi)\le \chi^2(\cdot\mmid\pi)$ (see~\cite[\S 2.4]{tsybakov2009nonparametric}).
The bound~\eqref{eq:expCSF} follows by applying Gr\"onwall's inequality.

To prove~\eqref{eq:exp+csf}, we use the stronger inequality $D_{\rm KL}(\cdot\mmid\pi) \le \ln[1+\chi^2(\cdot\mmid\pi)]$ (see~\cite[\S 2.4]{tsybakov2009nonparametric}). Our differential inequality now reads:
    \begin{align*}
        \partial_t D_{\rm KL}(\mu_t \mmid \pi)
        \le -\frac{2}{C_{\msf{P}}} \bigl( e^{D_{\rm KL}(\mu_t \mmid \pi)} - 1\bigr) 
         \iff   \partial_t \psi\big(D_{\rm KL}(\mu_t \mmid \pi)\big)
        \le -\frac{2}{C_{\msf{P}}}\psi\big(D_{\rm KL}(\mu_t \mmid \pi)\big)\,,
    \end{align*}
    where $\psi(x)=1-e^{-x} \le 1$.
Gr\"onwall's inequality now yields
    \begin{align*}
    \psi\big(D_{\rm KL}(\mu_t \mmid \pi)\big) \le e^{-\frac{2t}{C_{\sf P}}} \psi\big(D_{\rm KL}(\mu_0 \mmid \pi)\big) \le e^{-\frac{2t}{C_{\sf P}}}.
    \end{align*}
    Note that $x \le 2\psi(x)$ whenever $\psi(x) \le 1/e$. Thus, if  $t \ge C_{\sf P}/2$, we get $\psi\big(D_{\rm KL}(\mu_t \mmid \pi)\big) \le e^{-1}$ so
    \[
    D_{\rm KL}(\mu_t \mmid \pi)\le 2 \psi\big(D_{\rm KL}(\mu_t \mmid \pi)\big) \le e^{-\frac{2t}{C_{\sf P}}}\,,
    \]
which, together with~\eqref{eq:expCSF}, completes the proof of~\eqref{eq:exp+csf}.
% The proof of the second statement is deferred to Appendix~\ref{sec:csf_proofs}.
\end{proof}

\begin{rmk}
    In~\cite{chewi2020exponential}, it was observed that the chi-squared divergence decays exponentially fast along the gradient flow ${(\mu_t)}_{t\ge 0}$ for the KL divergence,  provided that $\pi$ satisfies a Poincar\'e inequality. This observation is made precise and more general in~\cite{matthes2009fourthorder} where it is noted that the \emph{gradient flow of a functional $\mf U$ dissipates a different functional $\mf V$ at the same rate that the gradient flow of $\mf V$ dissipates the functional $\mf U$}.  A similar method is used to study the thin film equation in~\cite{carrillo2002thinfilm} and~\cite[\S 5]{carlen2011functional}.
\end{rmk}

Since we are studying the gradient flow of the chi-squared divergence, it is natural to ask whether \ref{eq:csf} converges to $\pi$ in chi-squared divergence as well. In the next results, we show quantitative decay of the chi-squared divergence along the gradient flow under a Poincar\'e inequality~\eqref{eq:poincare}, but we obtain only a polynomial rate of decay. However, if we additionally assume either that $\pi$ is log-concave or that it satisfies a log-Sobolev inequality~\eqref{eq:lsi}, then we obtain exponential decay of the chi-squared divergence along \ref{eq:csf}.

\begin{thm}\label{thm:csf_chi_conv_poincare}
Suppose that $\pi$ satisfies a Poincar\'e inequality~\eqref{eq:poincare}.
Then, provided $\chi^2(\mu_0 \mmid \pi) < \infty$, the law ${(\mu_t)}_{t\ge 0}$ of \ref{eq:csf} satisfies
\begin{align*}
    \chi^2(\mu_t \mmid \pi) \le \chi^2(\mu_0\mmid \pi) \wedge \bigl( \frac{9C_{\msf P}}{8t} \bigr)^2.
\end{align*}
If we further assume that $\pi$ is log-concave, then
\begin{align*}
    \chi^2(\mu_t \mmid \pi)
    &\le \chi^2(\mu_0 \mmid \pi) \,  \e^{-\frac{t}{2C_{\msf P}}}.
\end{align*}
\end{thm}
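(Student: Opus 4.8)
The plan is to track the chi-squared divergence directly along \ref{eq:csf} and turn its energy dissipation into a differential inequality. Writing $\rho_t := \D\mu_t/\D\pi$, the dissipation rule of Section~\ref{sec:wgf} applied to $F=\chi^2(\cdot\mmid\pi)$ (whose Wasserstein gradient is $2\nabla\rho_t$ by~\eqref{eq:w2_grads}) gives
\[
\partial_t \chi^2(\mu_t\mmid\pi) = -\E_{\mu_t}\bigl[\norm{2\nabla\rho_t}^2\bigr] = -4\int \rho_t\,\norm{\nabla\rho_t}^2\,\D\pi = -\frac{16}{9}\int \norm{\nabla(\rho_t^{3/2})}^2\,\D\pi,
\]
where the last identity uses $\rho\norm{\nabla\rho}^2 = \tfrac49\norm{\nabla\rho^{3/2}}^2$. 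Applying the Poincar\'e inequality~\eqref{eq:poincare} to $\rho_t^{3/2}$ yields the master inequality $\partial_t\chi^2(\mu_t\mmid\pi)\le -\tfrac{16}{9C_{\msf P}}\var_\pi(\rho_t^{3/2})$, and everything reduces to lower bounding $\var_\pi(\rho_t^{3/2})$ by a power of $\chi^2(\mu_t\mmid\pi)=\var_\pi(\rho_t)$.

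For the polynomial bound I would prove the elementary estimate $\var_\pi(\rho^{3/2}) \ge \chi^2(\mu\mmid\pi)^{3/2}$. Using the representation $\var_\pi(f)=\tfrac12\iint (f(x)-f(y))^2\,\D\pi(x)\,\D\pi(y)$ together with the pointwise bound $u^{3/2}-v^{3/2}\ge (u-v)^{3/2}$ for $u\ge v\ge 0$ (immediate by differentiating in $u$), followed by Jensen for the convex map $t\mapsto t^{3/2}$, gives $\var_\pi(\rho^{3/2})\ge \sqrt2\,\var_\pi(\rho)^{3/2}$, which in particular exceeds $\chi^2(\mu\mmid\pi)^{3/2}$ (the constant $\sqrt2$ can be dropped to match the stated rate). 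Substituting into the master inequality gives $\partial_t y \le -\tfrac{16}{9C_{\msf P}}y^{3/2}$ for $y = \chi^2(\mu_t\mmid\pi)$; integrating the equivalent inequality $\partial_t(y^{-1/2})\ge \tfrac{8}{9C_{\msf P}}$ produces $y(t)\le (\tfrac{9C_{\msf P}}{8t})^2$, and combining with the monotonicity $y(t)\le y(0)$ (the dissipation is nonpositive) yields the stated minimum.

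For the log-concave bound the target is a \L{}ojasiewicz/gradient-domination inequality with exponent $1/2$: a \emph{linear} lower bound $\int\rho\,\norm{\nabla\rho}^2\,\D\pi\ge \tfrac{1}{8C_{\msf P}}\,\chi^2(\mu\mmid\pi)$, equivalently $\var_\pi(\rho^{3/2})\ge \tfrac{9}{32}\var_\pi(\rho)$. Once this is available the master inequality reads $\partial_t\chi^2(\mu_t\mmid\pi)\le -\tfrac{1}{2C_{\msf P}}\chi^2(\mu_t\mmid\pi)$ and Gr\"onwall gives the exponential rate $e^{-t/(2C_{\msf P})}$. The power inequality of the previous paragraph already delivers this linear bound whenever $\chi^2$ is bounded away from $0$, so the entire content lies in the near-equilibrium regime $\chi^2\to 0$, and this is where the main obstacle sits. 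Two features make it delicate: the dissipation $\int\rho\,\norm{\nabla\rho}^2\,\D\pi=\int\norm{\nabla\rho}^2\,\D\mu$ carries the ``wrong'' weight $\rho$ (rather than $1/\rho$, as in the usual relative Fisher information), so it is not controlled by a single use of Poincar\'e; and the reduction to $\var_\pi(\rho^{3/2})\ge c\,\var_\pi(\rho)$ only sees the distribution of the \emph{values} of $\rho$ under $\pi$, hence is blind to log-concavity. The linear bound therefore cannot come from Poincar\'e applied to $\rho^{3/2}$ alone, and log-concavity must be exploited through the spatial structure of $\pi$. I would attempt this by a Bakry--\'Emery $\Gamma_2$-type argument: differentiate the dissipation $D_t := \int\rho_t\norm{\nabla\rho_t}^2\,\D\pi$ in time, use convexity of $V$ (i.e.\ $\mathrm{CD}(0,\infty)$) to discard the nonnegative curvature term in the resulting Bochner identity, and combine the ensuing control on $D_t$ with Poincar\'e to close the loop. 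Carrying out this calculus for the \emph{nonlinear} dissipation of \ref{eq:csf}, rather than for a linear Langevin semigroup, is the principal technical difficulty.
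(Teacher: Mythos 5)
Your first bound is proved correctly, and by essentially the paper's route: compute the dissipation $\partial_t \chi^2(\mu_t \mmid \pi) = -4\E_{\mu_t}[\norm{\nabla \rho_t}^2]$, lower-bound it by ${\chi^2(\mu_t\mmid\pi)}^{3/2}$, and integrate the ODE. The only difference is that the paper cites \cite[Proposition 1]{chewi2020exponential} for the \L{}ojasiewicz inequality~\eqref{eq:loja}, whereas you reprove it inline; your chain --- the identity $\rho\norm{\nabla\rho}^2 = \frac{4}{9}\norm{\nabla \rho^{3/2}}^2$, Poincar\'e applied to $\rho^{3/2}$, the pointwise bound $u^{3/2}-v^{3/2}\ge (u-v)^{3/2}$ for $u \ge v \ge 0$, and Jensen under $\pi\otimes\pi$ --- is correct, and integrating $\partial_t (y^{-1/2}) \ge \frac{8}{9C_{\msf P}}$ reproduces the constant $(\frac{9C_{\msf P}}{8t})^2$ exactly.

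The log-concave case is a genuine gap: you correctly identify the needed PL inequality $\chi^2(\mu\mmid\pi) \le 8C_{\msf P}\E_\mu[\norm{\nabla \rho}^2]$ but do not prove it, offering only a Bakry--\'Emery $\Gamma_2$ plan that you yourself flag as the main difficulty. That plan is unlikely to close as described: log-concavity is only the curvature condition $\mathrm{CD}(0,\infty)$, and discarding a nonnegative $\Gamma_2$-type term when differentiating the dissipation yields monotonicity of the dissipation, not a rate --- Bakry--\'Emery arguments extract exponential rates from strictly positive curvature. The paper's mechanism is entirely different and static: log-concavity enters through displacement convexity of $\chi^2(\cdot\mmid\pi)$, whose above-tangent formulation along the optimal map $T$ from $\mu$ to $\pi$, followed by Cauchy--Schwarz, gives $\chi^2(\mu\mmid\pi) \le 2 W_2(\mu,\pi)\,\smash{\sqrt{\E_\mu[\norm{\nabla\rho}^2]}}$; combining this with the Poincar\'e-based transport inequality $W_2^2(\mu,\pi) \le 2C_{\msf P}\,\chi^2(\mu\mmid\pi)$ of \cite{liu2020quadratictransport} (Theorem~\ref{thm:chi2transport2Poincare} with $p=q=2$) yields the PL inequality, and Gr\"onwall gives the rate $e^{-t/(2C_{\msf P})}$.

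Moreover, your stated reason for abandoning the variance-comparison route is itself incorrect, in an instructive way. The linear comparison you dismiss as impossible in fact holds with constant $1$, purely as a statement about the law of $\rho$ under $\pi$: since $\E_\pi \rho = 1$, Cauchy--Schwarz gives $(\E_\pi \rho^{3/2})^2 = (\E_\pi[\rho^{1/2}\cdot\rho])^2 \le \E_\pi \rho^2$ and $(\E_\pi \rho^2)^2 = (\E_\pi[\rho^{1/2}\cdot \rho^{3/2}])^2 \le \E_\pi \rho^3$, whence $\var_\pi(\rho^{3/2}) \ge (\E_\pi\rho^2)^2 - \E_\pi\rho^2 = (1+\chi^2(\mu\mmid\pi))\,\chi^2(\mu\mmid\pi) \ge \chi^2(\mu\mmid\pi)$. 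Your worry about the near-equilibrium regime is unfounded because the constraint $\E_\pi\rho = 1$ prevents the values of $\rho$ from concentrating where $x \mapsto x^{3/2}$ contracts (namely $x < 4/9$). Be aware that this two-line observation would prove \emph{more} than the theorem asserts (exponential decay of $\chi^2$ under a Poincar\'e inequality alone, with no log-concavity), so it warrants independent verification before you rely on it; but it shows that the obstruction motivating your pivot to the $\Gamma_2$ approach is not real, and in any event, as written, your proof of the second bound is incomplete.
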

\begin{proof}
    The proof is deferred to Appendix~\ref{sec:csf_proofs}.
\end{proof}

Under the stronger assumption~\eqref{eq:lsi}, we can show strong uniform ergodicity as  in Theorem~\ref{thm:csf_kl_conv}.

\begin{thm}\label{thm:csf_conv_lsi}
Assume that $\pi$ satisfies a log-Sobolev inequality~\eqref{eq:lsi}.
Let ${(\mu_t)}_{t\ge 0}$ denote the law of \ref{eq:csf}, and assume that $\chi^2(\mu_0 \mmid \pi) < \infty$.
Then, for all $t\ge 7C_{\msf{LSI}}$,
\begin{align*}
    \chi^2(\mu_t \mmid \pi)
    &\le \bigl(\chi^2(\mu_0 \mmid \pi) \wedge 2\bigr) \, \e^{-\frac{t}{9C_{\msf{LSI}}}}.
\end{align*}
\end{thm}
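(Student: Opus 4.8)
The goal is to prove exponential decay of the chi-squared divergence along \ref{eq:csf} under a log-Sobolev inequality, with strong uniform ergodicity kicking in after time $7C_{\msf{LSI}}$. The plan is to track the chi-squared divergence directly and derive a closed differential inequality for it. Starting from the Wasserstein-gradient computation used in Theorem~\ref{thm:csf_kl_conv}, I would compute
\[
\partial_t \chi^2(\mu_t \mmid \pi) = \E_{\mu_t}\bigl\langle \nabla_{W_2}\chi^2(\mu_t\mmid\pi),\, v_t\bigr\rangle = -2\,\E_{\mu_t}\bigl[\bigl\lVert \nabla\tfrac{\D\mu_t}{\D\pi}\bigr\rVert^2\bigr],
\]
since the flow velocity is $v_t = -2\nabla(\D\mu_t/\D\pi)$. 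The task is then to lower-bound the Dirichlet-type energy $\E_\pi[\lVert\nabla(\D\mu_t/\D\pi)\rVert^2]$ by something comparable to $\chi^2(\mu_t\mmid\pi)$ using~\eqref{eq:lsi} rather than the cruder~\eqref{eq:poincare}. Writing $g := \D\mu_t/\D\pi$ and $f := \sqrt{g}$, I would apply the log-Sobolev inequality to $f$, so that $\E_\pi[\lVert\nabla f\rVert^2]\ge \tfrac{1}{2C_{\msf{LSI}}}\on{ent}_\pi(f^2) = \tfrac{1}{2C_{\msf{LSI}}}\on{ent}_\pi(g)$. Since $\nabla g = 2f\,\nabla f$, the energy $\E_\pi[\lVert\nabla g\rVert^2] = 4\,\E_\pi[g\,\lVert\nabla f\rVert^2]$ is in general larger than $\E_\pi[\lVert\nabla f\rVert^2]$, which works in our favour.

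The main obstacle is converting the entropy bound into control of $\chi^2 = \E_\pi[g^2]-1 = \E_\pi[(g-1)^2]$. The quantity $\on{ent}_\pi(g) = \E_\pi[g\ln g]$ is the KL divergence $D_{\rm KL}(\mu_t\mmid\pi)$, so naively~\eqref{eq:lsi} gives decay of an entropy that is weaker than $\chi^2$ at large values of $g$ — exactly where $\chi^2$ lives. To close the loop I expect to exploit the extra factor of $g$ in the energy: the correct comparison is not between $\E_\pi[\lVert\nabla f\rVert^2]$ and $\on{ent}_\pi(g)$ but a sharper relation capturing that $\E_\pi[g\,\lVert\nabla f\rVert^2]$ dominates. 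Concretely, I would apply~\eqref{eq:lsi} to $f=\sqrt g$ and massage $\on{ent}_\pi(g)$ against $\chi^2(\mu_t\mmid\pi)$ via an elementary inequality of the form $a\ln a - a + 1 \ge c\,(\text{something involving } (a-1)^2)$ valid on the relevant range, while separately handling the regime of large $\chi^2$. This is where the somewhat unusual constants $7$ and $9$ in the statement will come from: they reflect a two-regime argument, one regime where $\chi^2$ is bounded (say $\le 2$) after the burn-in time $7C_{\msf{LSI}}$, and a transient regime reducing $\chi^2$ from an arbitrary finite initial value down to that threshold.

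The structural strategy therefore mirrors the proof of Theorem~\ref{thm:csf_kl_conv}: first establish a differential inequality $\partial_t \chi^2(\mu_t\mmid\pi)\le -\kappa\,\Phi(\chi^2(\mu_t\mmid\pi))$ for an appropriate concave $\Phi$ and rate $\kappa \asymp 1/C_{\msf{LSI}}$, then integrate it with Gr\"onwall's inequality. For the strong-uniform-ergodicity improvement — the $\wedge\,2$ — I would argue that regardless of the (possibly large) initial value $\chi^2(\mu_0\mmid\pi)$, the nonlinear dissipation drives $\chi^2(\mu_t\mmid\pi)$ below the universal threshold $2$ within time $7C_{\msf{LSI}}$, after which the linearized rate $e^{-t/(9C_{\msf{LSI}})}$ takes over; combining the burn-in estimate with the post-threshold exponential bound yields the stated inequality for all $t\ge 7C_{\msf{LSI}}$. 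I expect the delicate part to be tuning the elementary scalar inequalities so that the dissipation functional $\Phi$ both lower-bounds the true rate and integrates to the clean exponential constant $1/(9C_{\msf{LSI}})$.
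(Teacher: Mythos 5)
Your high-level architecture matches the paper's proof: a closed differential inequality for $\chi^2(\mu_t \mmid \pi)$, a two-regime analysis with a burn-in time of order $3.5\,C_{\msf{LSI}}$ (doubled to $7C_{\msf{LSI}}$ to absorb the transient into the exponential), and Gr\"onwall. But the central analytic step is missing, and your proposed substitute would fail. First, a small slip: the dissipation along \ref{eq:csf} is $\partial_t \chi^2(\mu_t \mmid \pi) = -4\,\E_{\mu_t}[\lVert \nabla(\D\mu_t/\D\pi)\rVert^2]$, not $-2$ (the gradient $2\nabla(\D\mu_t/\D\pi)$ pairs with the velocity $-2\nabla(\D\mu_t/\D\pi)$). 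The real issue is which function to feed into \eqref{eq:lsi}. Writing $g := \D\mu_t/\D\pi$, the dissipation is the \emph{weighted} energy $\E_\pi[g \lVert\nabla g\rVert^2]$, and the key identity is $g\lVert \nabla g\rVert^2 = \frac{4}{9}\lVert \nabla (g^{3/2})\rVert^2$: the correct choice is $f = g^{3/2}$, which via \eqref{eq:lsi} produces $\on{ent}_\pi(g^3)$. Your choice $f = \sqrt{g}$ produces $\on{ent}_\pi(g) = D_{\rm KL}(\mu_t \mmid \pi)$ and cannot be repaired, for two reasons. (i) Since $\E_\pi[g\lVert\nabla g\rVert^2] = 4\,\E_\pi[g^2 \lVert \nabla \sqrt{g}\rVert^2]$, the weight $g^2$ cuts the wrong way on the set $\{g < 1\}$, so the actual dissipation is \emph{not} bounded below by a constant multiple of $\E_\pi[\lVert\nabla\sqrt{g}\rVert^2]$; the LSI applied to $\sqrt{g}$ simply does not control the quantity you need. (ii) Even granting that step, you would need $D_{\rm KL}(\mu_t\mmid\pi) \gtrsim \chi^2(\mu_t\mmid\pi)$, and the comparison between these divergences goes the other way ($D_{\rm KL} \le \chi^2$, with no reverse bound). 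Your proposed scalar inequality $a\ln a - a + 1 \ge c\,(a-1)^2$ fails for large $a$ (the left side grows like $a \ln a$, the right like $a^2$), and the problematic regime is \emph{pointwise} large $g$, which can occur even when $\chi^2(\mu_t\mmid\pi) \le 2$, so restricting to the small-$\chi^2$ regime does not rescue it.

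The second missing device is the conversion of $\on{ent}_\pi(g^3)$ into a function of $\chi^2$. The paper chooses the test function $\ln g$ in the variational formula $\on{ent}_\pi f = \sup\{\E_\pi(fg') : \E_\pi \exp g' = 1\}$ (admissible since $\E_\pi g = 1$), giving $\on{ent}_\pi(g^3) \ge \frac{1}{3}\E_\pi[g^3 \ln (g^3)]$, and then two applications of convexity and monotonicity of $x \mapsto x\ln x$ yield $\on{ent}_\pi(g^3) \ge \frac{1}{2}\,{[1+\chi^2(\mu_t\mmid\pi)]}^{3/2} \ln[1 + \chi^2(\mu_t\mmid\pi)]$. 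This produces the master inequality $\partial_t \chi^2(\mu_t\mmid\pi) \le -\frac{4}{9C_{\msf{LSI}}}\,{[1+\chi^2(\mu_t\mmid\pi)]}^{3/2}\ln[1+\chi^2(\mu_t\mmid\pi)]$, after which your two-regime plan goes through exactly as you envisioned: when $\chi^2 \ge \e - 1$, bound $\ln(1+\chi^2) \ge 1$ to get a \L{}ojasiewicz-type inequality forcing $\chi^2(\mu_t\mmid\pi) \le {(9C_{\msf{LSI}}/(2t))}^2$, so the flow exits this regime by $t_0 = 3.5\,C_{\msf{LSI}}$; when $\chi^2 \le \e-1$, bound $\ln(1+\chi^2) \ge \chi^2/2$, integrate from $t_0$, and take $t \ge 2t_0 = 7C_{\msf{LSI}}$. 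So your reading of where the constants $7$ and $9$ come from was accurate; the gap is the pair of concrete mechanisms---the exponent $3/2$ inside the LSI and the variational lower bound on $\on{ent}_\pi(g^3)$---without which the differential inequality never closes.
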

\begin{proof}
    The proof is deferred to Appendix~\ref{sec:csf_proofs}.
\end{proof}

Convergence in chi-squared divergence was studied in recent works such as~\cite{cao2019renyi, vempala2019langevin, chewi2020exponential}. From standard comparisons between information divergences (see~\cite[\S 2.4]{tsybakov2009nonparametric}), it implies convergence in total variation distance, Hellinger distance, and KL divergence. Moreover, recent works have shown that the Poincar\'e inequality~\eqref{eq:poincare} yields transportation-cost inequalities which bound the $2$-Wasserstein distance by powers of the chi-squared divergence~\cite{ding2015quadratictransport, ledoux2018remarks, chewi2020exponential, liu2020quadratictransport}, so we obtain convergence in the $2$-Wasserstein distance as well. In particular, we mention that~\cite{chewi2020exponential} uses the chi-squared gradient flow~\eqref{eq:csf} to prove a transportation-cost inequality.

\section{Laplacian Adjusted Wasserstein Gradient Descent (LAWGD)}
\label{sec:KGD}

While the previous section leads to a better understanding of the convergence properties of SVGD in the case that $\cK_\pi$ is the identity operator, it is still unclear how to choose the kernel $K$ to approach this idealized setup. For \ref{eq:svgd} with a general kernel $K$, the calculation rules of Section~\ref{sec:wgf} together with the method of the previous section yield the formula
\begin{align*}
    \partial_t D_{\rm KL}(\mu_t \mmid \pi)
    &= -\E_\pi\bigl\langle \nabla \frac{\D\mu_t}{\D\pi}, \mc K_\pi \nabla \frac{\D\mu_t}{\D\pi} \bigr\rangle,
\end{align*}
for the dissipation of the KL divergence along \ref{eq:svgd}. From this, a natural way to proceed is to seek an inequality of the form
\begin{align}\label{eq:kernel_poincare}
    \E_\pi\langle f, \mc K_\pi f \rangle \gtrsim \E_\pi[f^2], \qquad\text{for all locally Lipschitz}~f \in L^2(\pi).
\end{align}
Applying this inequality to each coordinate of $\nabla (\D \mu_t/\D \pi)$ separately and using a Poincar\'e inequality would then allow us to conclude as in the proof of Theorem~\ref{thm:csf_kl_conv}. The inequality~\eqref{eq:kernel_poincare} can be interpreted as a positive lower bound on the smallest eigenvalue of the operator $\mc K_\pi$.
However, this approach is doomed to fail; under mild conditions on the kernel $K$, it is a standard fact that the eigenvalues of $\mc K_\pi$ form a sequence converging to $0$, so no such spectral gap can hold.\footnote{It is enough that $K$ is a symmetric kernel with $K \in L^2(\pi\otimes \pi)$, and that $\pi$ is not discrete (so that $L^2(\pi)$ is infinite-dimensional); see~\cite[Appendix A.6]{bakry2014markov}.}

This suggests that any approach which seeks to prove finite-time convergence results for \ref{eq:svgd} in the spirit of Theorem~\ref{thm:csf_kl_conv} must exploit finer properties of the eigenspaces of the operator $\mc K_\pi$. Motivated by this observation, we develop a new algorithm called Laplacian Adjusted Wasserstein Gradient Descent \eqref{eq:msvgd} in which the kernel $K$ is chosen carefully so that $\cK_\pi=\cL^{-1}$ is the inverse
of the generator of the Langevin diffusion that has $\pi$ as invariant measure.
%(Smoluchowski) Laplacian operator of the Langevin diffusion that has $\pi$ as invariant measure.

More precisely, the starting point for our approach is the following integration-by-parts formula, which is a crucial component of the theory of Markov semigroups~\cite{bakry2014markov}:
\begin{equation}\label{eq:int_by_parts}
    \E_\pi \langle \nabla f, \nabla g \rangle = \E_\pi[f \cL g], \qquad\text{for all locally Lipschitz}~f,g \in L^2(\pi),
\end{equation}
where $\cL := -\Delta + \langle \nabla V, \nabla \cdot\rangle$. The operator $\cL$
%, known as the (Smoluchowski) Laplacian operator,
%~\cite[\S 4.5]{pavliotis2014stochastic}, 
is the (negative) generator of the standard Langevin diffusion with stationary distribution $\pi$~\cite[\S 4.5]{pavliotis2014stochastic}. We refer readers to Appendix~\ref{sec:spectral} for background on the spectral theory of $\ms L$.

In order to use~\eqref{eq:int_by_parts}, we replace $-\mc K_\pi \nabla(\D\mu_t/\D\pi)$ by the vector field $-\nabla \mc K_\pi(\D\mu_t/\D\pi)$. The new dynamics 
%\emph{Laplacian Adjusted Wasserstein Gradient Descent} (LAWGD), and it 
follow the evolution equation
\begin{align}\label{eq:msvgd}\tag{$\msf{LAWGD}$}
    \partial_t \mu_t
    &= \divergence\bigl( \mu_t \nabla \cK_\pi \frac{\D\mu_t}{\D\pi}\bigr).
\end{align}
The vector field in the above continuity equation may also be written
\begin{align*}
    -\nabla \cK_\pi \frac{\D\mu_t}{\D\pi}(x)
    &= -\int \nabla_1 K(x, \cdot) \, \frac{\D\mu_t}{\D\pi} \, \D \pi
    = -\int \nabla_1 K(x, \cdot) \, \D \mu_t.
\end{align*}

Replacing $\mu_t$ by an empirical average over particles and discretizing the process in time, we again obtain an implementable algorithm, which we give as Algorithm~\ref{ALG:MSVGD}.

% While the previous section leads to a better understanding of the convergence properties of SVDG in the case $\cK_\pi$ is the identity operator, it is still unclear how to choose the kernel $K$ to approach this idealized setup. To overcome this limitation, we propose the an alternative algorithm called \ndpr{NAME} that leads to a more direct connection with the gradient flow of the chi-squared divergence.

% Let now $K$

% by our understanding of the relationship between SVGD and the chi-squared gradient flow, we propose the following alternative to 

% propose a variant of SVGD, which we call \emph{modified Stein variational gradient descent} (MSVGD). \sinho{Does anyone have a better prefix than modified?} We describe it as Algorithm~\ref{ALG:MSVGD}.
% The continuous-time dynamics of the particles as the step size $h$ tends to zero is
% \begin{align}\label{eq:msvgd_update}
%     \dot X_t^{[i]}
%     &= - \frac{1}{N} \sum_{j=1}^N \nabla_1 K(X_t^{[i]}, X_t^{[j]}), \qquad i=1,\dotsc,N.
% \end{align}
% In the mean-field limit as the number of particles $N$ tends to infinity as well, the partials move according to the vector field
% \begin{align*}
%     v_t(x)
%     &= -\int \nabla_1 K(x, \cdot) \, \D \mu_t
%     = -\int \nabla_1 K(x, \cdot) \, \frac{\D\mu_t}{\D\pi} \, \D \pi
%     = -\nabla \cK_\pi \frac{\D\mu_t}{\D\pi}(x).
% \end{align*}
% Therefore, we obtain the following equation for the law ${(\mu_t)}_{t\ge 0}$ of the particles:
% \begin{align}\label{eq:msvgd}\tag{$\msf{MSVGD}$}
%     \partial_t \mu_t
%     &= \divergence\bigl( \mu_t \nabla \cK_\pi \frac{\D\mu_t}{\D\pi}\bigr).
% \end{align}

A careful inspection of Algorithm~\ref{ALG:MSVGD} reveals that the update equation for the particles in Algorithm~\ref{ALG:MSVGD} does not involve the potential $V$ directly, unlike the SVGD algorithm~\eqref{eq:svgd_alg}; thus, the kernel for \ref{eq:msvgd} must contain all the information about $V$. 

Our choice for the kernel $K$ is guided by the following observation (based on~\eqref{eq:int_by_parts}):
  \begin{align*}
        \partial_t D_{\rm KL}(\mu_t \mmid \pi)
        &= -\E_\pi\bigl\langle \nabla \frac{\D\mu_t}{\D\pi}, \nabla \cK_\pi \frac{\D\mu_t}{\D\pi} \bigr\rangle
        = -\E_\pi\bigl[\frac{\D\mu_t}{\D\pi} \cL\cK_\pi \frac{\D\mu_t}{\D\pi}\bigr].
    \end{align*}
As a result, we choose $K$  to ensure  that $\mc K_\pi=\cL^{-1}$. This choice yields
  \begin{align}
  \label{eq:L-1}
        \partial_t D_{\rm KL}(\mu_t \mmid \pi)
      = -\E_\pi\bigl[\bigl( \frac{\D\mu_t}{\D\pi} - 1\bigr)^2\bigr] = -\chi^2(\mu_t\mmid \pi).
    \end{align}

\begin{wrapfigure}[11]{r}{.5\textwidth}
\vspace{-0.8cm}
\begin{minipage}{0.5\textwidth}
\begin{algorithm}[H]
  \caption{LAWGD}\label{ALG:MSVGD}
  \begin{algorithmic}[1]
    \Procedure{LAWGD}{$\sK_{\cL}, \mu_0$}
    % \State Diagonalize $\cL=\sum_i \lambda_i \phi_i$
    % \State define $K$ according to~\eqref{eq:KGD_kern}
      \State draw $N$ particles $X_0^{[1]},\dotsc,X_0^{[N]} \simiid \mu_0$
      \For{$t = 1, \ldots, T-1$}
      \For{$i=1,\dotsc,N$}
      \State $X_{t+1}^{[i]} \gets X_t^{[i]} - \frac{h}{N} \sum_{j=1}^N \nabla_1 \sK_{\cL}(X_t^{[i]}, X_t^{[j]})$
      \EndFor
      \EndFor
      \State \textbf{return } $X_T^{[1]},\dotsc,X_T^{[N]}$
   \EndProcedure
  \end{algorithmic}
\end{algorithm}
\end{minipage}
\end{wrapfigure}

It remains to see which kernel $K$ implements $\mc K_\pi=\cL^{-1}$. To that end, assume that $\cL$ has a discrete spectrum and let $(\lambda_i,\phi_i)$, $i=0,1,2,\dotsc$ be its  eigenvalue-eigenfunction pairs where $\lambda_j$s are arranged in nondecreasing order. Assume further that $\lambda_1 > 0$ (which amounts to a Poincar\'e inequality; see Appendix~\ref{sec:spectral})
%so that $\cL$ is invertible on centered functions of its domain
and define the following \emph{spectral kernel}:
\begin{equation}\label{eq:KGD_kern}
\sK_{\cL}(x,y) = \sum_{i=1}^\infty \frac{ \phi_i(x) \phi_i(y)}{\lambda_i}
\end{equation}
We now show that this choice of kernel endows \ref{eq:msvgd} with a remarkable property: it converges to the target distribution exponentially fast, with a rate which has no dependence on the Poincar\'e constant.
Moreover, akin to \ref{eq:csf}---see~\eqref{eq:exp+csf}---it also also exhibit strong uniform ergodicity.

% For MSVGD, the ``ideal'' choice of kernel is given by $K(x,y) = \sum_{i=1}^\infty \lambda_i^{-1} \phi_i(x) \phi_i(y)$, where ${(\lambda_i, \phi_i)}_{i=1}^\infty$ are the eigenvalue-eigenvector pairs for the differential operator $L = -\Delta + \langle \nabla V, \nabla \cdot\rangle$; with this choice of kernel, the integral operator $\cK_\pi$ becomes the operator $L^{-1}$. We provide a review of the spectral theory of $L$ in Appendix~\ref{sec:spectral}. To justify that this is indeed the ``ideal'' choice of kernel, we provide the following convergence result.

\begin{thm}\label{thm:msvgd_conv}
Assume that $\cL$ has a discrete spectrum and that $\pi$ satisfies a Poincar\'e inequality~\eqref{eq:poincare} with some finite constant.
Let ${(\mu_t)}_{t\ge 0}$ be the law of \ref{eq:msvgd} with the kernel described above.
Then, for all $t\ge 1$,
\begin{align*}
    D_{\rm KL}(\mu_t \mmid \pi) \le \bigl( D_{\rm KL}(\mu_0 \mmid \pi) \wedge 2\bigr) \, e^{-t} \,.
\end{align*}
\end{thm}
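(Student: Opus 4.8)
The plan is to reduce the claim to a differential inequality for $D_{\rm KL}(\mu_t \mmid \pi)$ and then replay the Grönwall argument from the proof of Theorem~\ref{thm:csf_kl_conv}. The crucial input is the dissipation identity~\eqref{eq:L-1}, namely $\partial_t D_{\rm KL}(\mu_t \mmid \pi) = -\chi^2(\mu_t \mmid \pi)$, which holds precisely because the spectral kernel~\eqref{eq:KGD_kern} realizes $\cK_\pi = \cL^{-1}$ on the orthogonal complement of the constants. Writing $f_t := \D\mu_t/\D\pi$ and expanding $f_t$ in the eigenbasis $(\phi_i)_{i\ge 0}$ with $\phi_0 \equiv 1$, the operator $\cK_\pi$ annihilates the $\phi_0$-component and divides the $i$-th coefficient by $\lambda_i$; hence $\cL\cK_\pi f_t = f_t - \E_\pi f_t = f_t - 1$, and substituting into $\partial_t D_{\rm KL}(\mu_t \mmid \pi) = -\E_\pi[f_t\, \cL\cK_\pi f_t]$ yields $-\E_\pi[(f_t-1)^2] = -\chi^2(\mu_t \mmid \pi)$. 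This is the step that removes all dependence on $C_{\msf P}$: inverting $\cL$ exactly cancels the spectral gap that would otherwise appear in the dissipation.

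First, to produce the factor $D_{\rm KL}(\mu_0 \mmid \pi)$, I would invoke the comparison $D_{\rm KL}(\cdot \mmid \pi) \le \chi^2(\cdot \mmid \pi)$ to turn~\eqref{eq:L-1} into $\partial_t D_{\rm KL}(\mu_t \mmid \pi) \le -D_{\rm KL}(\mu_t \mmid \pi)$, whereupon Grönwall's inequality gives $D_{\rm KL}(\mu_t \mmid \pi) \le D_{\rm KL}(\mu_0 \mmid \pi)\, e^{-t}$ for all $t \ge 0$. This is exactly the first half of the proof of Theorem~\ref{thm:csf_kl_conv} with $2/C_{\msf P}$ replaced by $1$, the substitution being the entire content of the Poincaré-constant-free rate.

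Second, to obtain the constant $2$, I would use the sharper comparison $D_{\rm KL}(\cdot \mmid \pi) \le \ln[1+\chi^2(\cdot\mmid \pi)]$, so that~\eqref{eq:L-1} becomes $\partial_t D_{\rm KL}(\mu_t \mmid \pi) \le -(e^{D_{\rm KL}(\mu_t \mmid \pi)} - 1)$. Setting $\psi(x) = 1 - e^{-x}$, a short computation (since $\psi'(x) = e^{-x}$) shows $\partial_t \psi(D_{\rm KL}(\mu_t \mmid \pi)) \le -\psi(D_{\rm KL}(\mu_t \mmid \pi))$, and Grönwall yields $\psi(D_{\rm KL}(\mu_t \mmid \pi)) \le e^{-t}\,\psi(D_{\rm KL}(\mu_0 \mmid \pi)) \le e^{-t}$. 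Using the elementary fact (already recorded in the proof of Theorem~\ref{thm:csf_kl_conv}) that $x \le 2\psi(x)$ whenever $\psi(x) \le 1/e$, and noting that $t \ge 1$ forces $\psi(D_{\rm KL}(\mu_t \mmid \pi)) \le e^{-1}$, I obtain $D_{\rm KL}(\mu_t \mmid \pi) \le 2\psi(D_{\rm KL}(\mu_t \mmid \pi)) \le 2 e^{-t}$ for $t \ge 1$. Combining with the first bound gives $D_{\rm KL}(\mu_t \mmid \pi) \le (D_{\rm KL}(\mu_0 \mmid \pi) \wedge 2)\, e^{-t}$ for $t \ge 1$, as claimed.

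As for the main obstacle: the differential-inequality machinery is entirely routine and mirrors Theorem~\ref{thm:csf_kl_conv}, so the real work is conceptual and lies in establishing~\eqref{eq:L-1} rigorously — that is, in verifying that the spectral kernel~\eqref{eq:KGD_kern} genuinely implements $\cK_\pi = \cL^{-1}$ and that the supporting manipulations (discreteness of the spectrum, convergence of the eigenexpansion, the integration-by-parts formula~\eqref{eq:int_by_parts}, and the legitimacy of differentiating $D_{\rm KL}$ along the flow) are justified. The Poincaré assumption enters exactly here, guaranteeing $\lambda_1 > 0$ so that $\cL^{-1}$ is bounded on mean-zero functions and the kernel is well-defined; once~\eqref{eq:L-1} is in hand, $C_{\msf P}$ is never used again, which is precisely why the final rate is scale-invariant.
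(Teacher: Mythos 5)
Your proposal is correct and takes essentially the same route as the paper: the paper derives the dissipation identity~\eqref{eq:L-1} from the spectral choice $\cK_\pi = \cL^{-1}$ exactly as you do, and its proof of Theorem~\ref{thm:msvgd_conv} is literally the statement that, given~\eqref{eq:L-1}, the argument is identical to that of Theorem~\ref{thm:csf_kl_conv} --- which is precisely the two-step Gr\"onwall argument (via $D_{\rm KL} \le \chi^2$ and $D_{\rm KL} \le \ln[1+\chi^2]$ with $\psi(x) = 1 - e^{-x}$) that you replay with the rate $2/C_{\msf P}$ replaced by $1$ and the threshold time $C_{\msf P}/2$ replaced by $1$. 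Your explicit eigenbasis verification that $\cL\cK_\pi f_t = f_t - \E_\pi f_t = f_t - 1$ simply spells out what the paper leaves implicit in its definition of the spectral kernel~\eqref{eq:KGD_kern}.
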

\begin{proof}
    % The kernel $K$ is chosen so that $\cK_\pi = \cL^{-1}$; thus,
    % \begin{align*}
    %     \partial_t D_{\rm KL}(\mu_t \mmid \pi)
    %     &= -\E_\pi\bigl\langle \nabla \frac{\D\mu_t}{\D\pi}, \nabla \cK_\pi \frac{\D\mu_t}{\D\pi} \bigr\rangle
    %     = -\E_\pi\bigl[\frac{\D\mu_t}{\D\pi} \cL\cK_\pi \frac{\D\mu_t}{\D\pi}\bigr]
    %     = -\E_\pi\bigl[\bigl( \frac{\D\mu_t}{\D\pi} - 1\bigr)^2\bigr] \\
    %     &= -\chi^2(\mu_t\mmid \pi).
    % \end{align*}
    In light of~\eqref{eq:L-1}, the proof is identical to that of Theorem~\ref{thm:csf_kl_conv}.
\end{proof}

The convergence rate in Theorem~\ref{thm:msvgd_conv} has no dependence on the target measure. This \emph{scale-invariant convergence} also appears in~\cite{chewi2020exponential}, where it is shown for the Newton-Langevin diffusion with a strictly log-concave target measure $\pi$. In Theorem~\ref{thm:msvgd_conv}, we obtain similar guarantees under the much weaker assumption of a Poincar\'e inequality; indeed, there are many examples of non-log-concave distributions which satisfy a Poincar\'e inequality~\cite{vempala2019langevin}.

\section{Experiments}

\begin{wrapfigure}[14]{r}{.34\textwidth}
\vspace{-0.6cm}
    \centering
    \includegraphics[width = 0.31\textwidth]{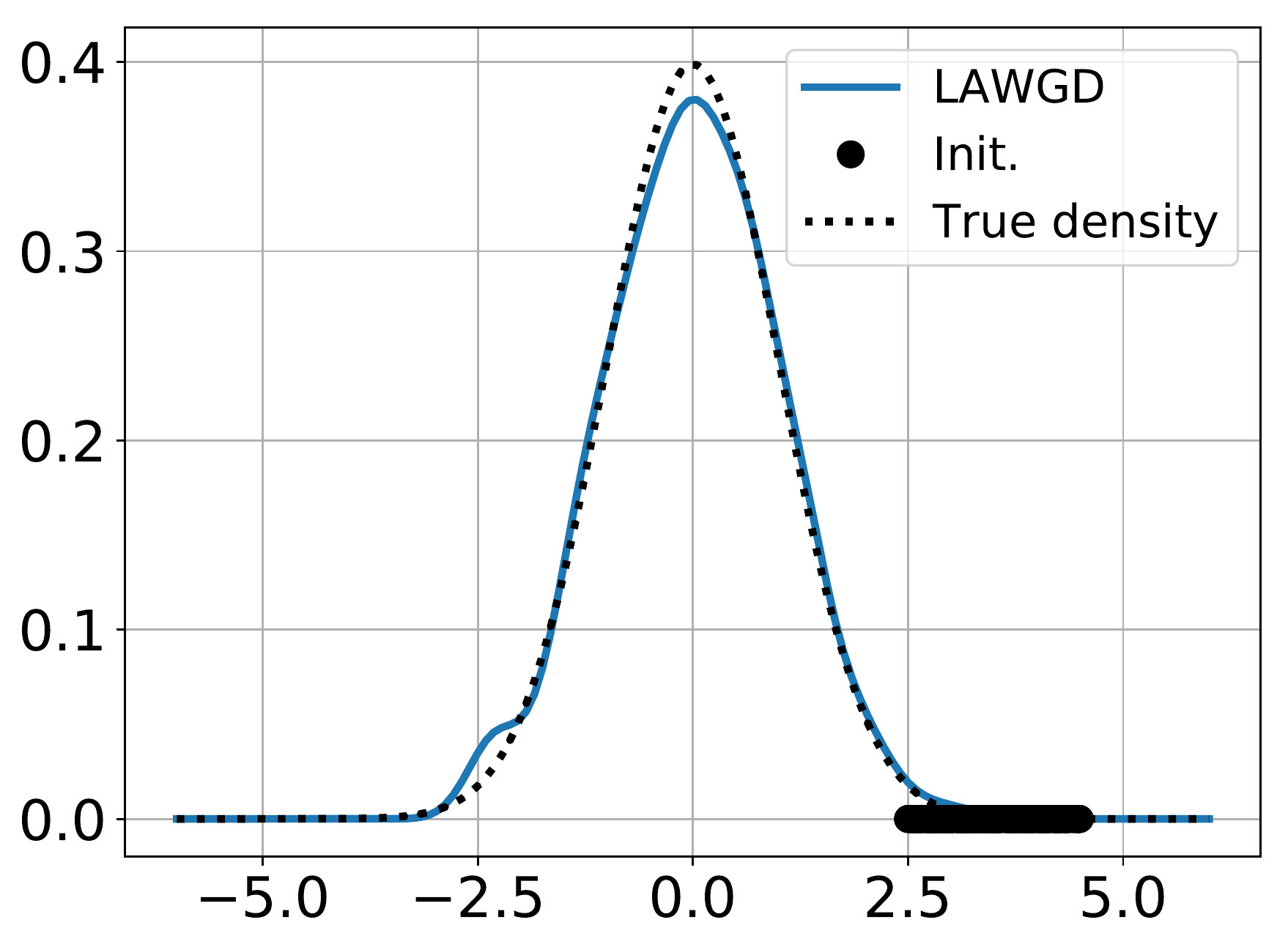}
    \vspace{-0.2cm}
    \caption{Samples from the standard Gaussian distribution generated by \ref{eq:msvgd}, with kernel approximated by Hermite polynomials. For details, see Appendix~\ref{append:numerics}.}
    \label{fig:gaussian}
\end{wrapfigure}

%We numerically examine the performance of \ref{eq:msvgd} by implementing Algorithm~\ref{ALG:MSVGD}
To implement Algorithm~\ref{ALG:MSVGD}, we numerically approximate the kernel $K=\sK_{\cL}$ given in~\eqref{eq:KGD_kern}. When $\pi$ is the standard Gaussian distribution on $\R$, the eigendecomposition of the operator $\ms L$ in~\eqref{eq:int_by_parts} is known explicitly in terms of the Hermite polynomials~\cite[\S 2.7.1]{bakry2014markov}, and we approximate the kernel via a truncated sum: $\hat K(x,y) = \sum_{i=1}^k \lambda^{-1}_i \phi_i(x) \phi_i(y)$ (Figure~\ref{fig:gaussian}) involving the smallest eigenvalues of $\ms L$.

In the general case, we implement a basic finite difference (FD) method to approximate the eigenvalues and eigenfunctions of $\ms L$. We obtain better numerical results by first transforming the operator $\ms L$ into the Schr\"odinger operator $\ms L_{\msf S} := -\Delta + V_{\msf S}$, where $V_{\msf S} := \frac{1}{4}\norm{\nabla V}^2 - \frac{1}{2}\Delta V$. If $\phi_{\msf S}$ is an eigenfunction of $\ms L_{\msf S}$ with eigenvalue $\lambda$ (normalized such that $\int \phi_{\msf S}^2 = 1$), then $\phi := \e^{V/2} \phi_{\msf S}$ is an eigenfunction of $L$ also with eigenvalue $\lambda$ (and normalized such that $\int \phi^2 \, \D \pi = 1$); see~\cite[\S 1.15.7]{bakry2014markov}.

\begin{wrapfigure}[15]{r}{.31\textwidth}
\vspace{-0.9cm}
    \centering
    \includegraphics[width = 0.31\textwidth]{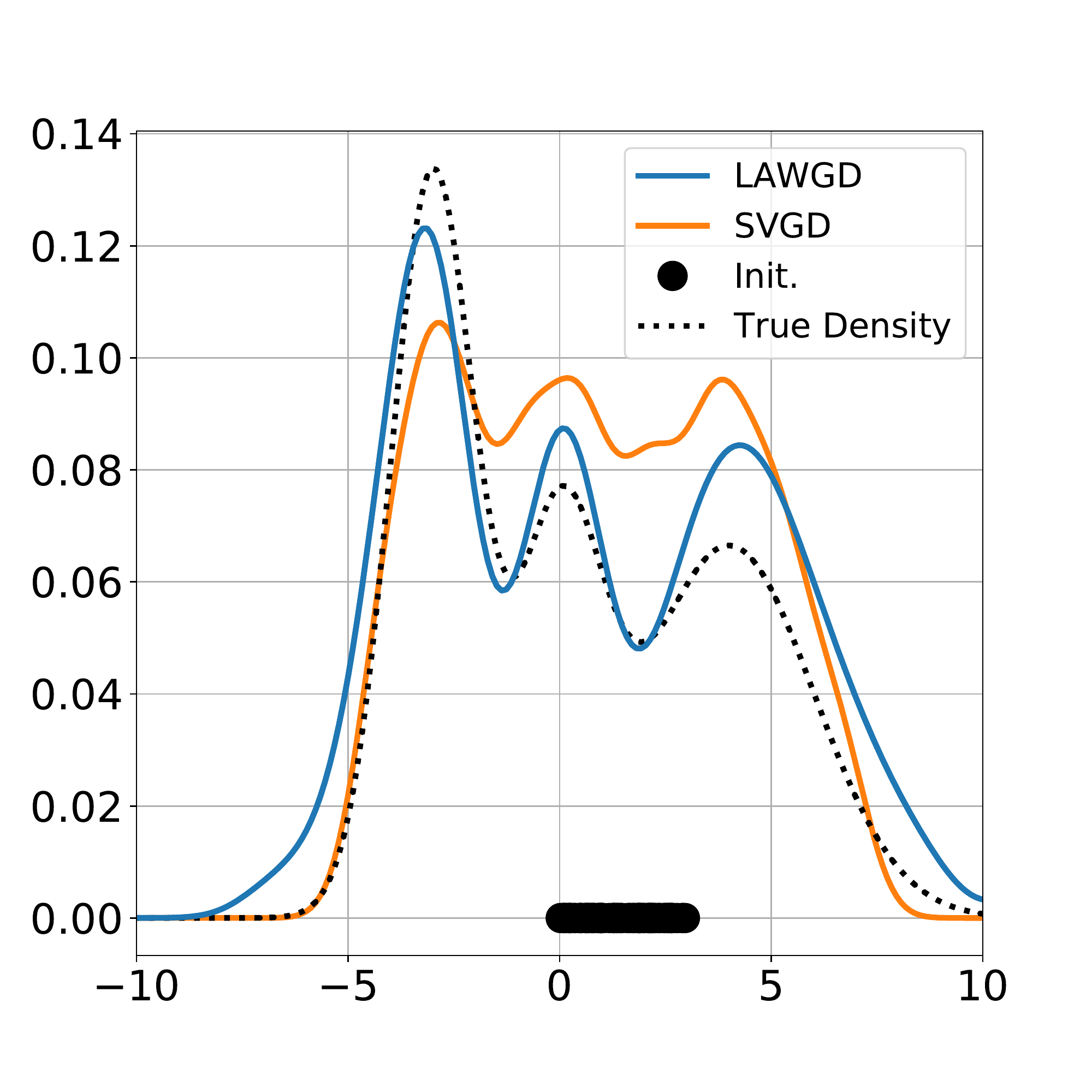}
    \vspace{-0.9cm}
    \caption{\ref{eq:msvgd} and SVGD run with constant step size for a mixture of three Gaussians. Both kernel density estimators use the same bandwidth. }
    \label{fig:gaussmix3}
\end{wrapfigure}

On a grid of points (with spacing $\varepsilon$), if we replace the Laplacian with the FD operator $\Delta_\varepsilon f(x) := \{f(x-\varepsilon) + f(x+\varepsilon) - 2f(x)\}/\varepsilon^2$ (in 1D), then the FD Schr\"odinger operator $\ms L_{{\msf S},\varepsilon} := -\Delta_{\varepsilon} + V_{\msf S}$ can be represented as a sparse matrix, and its eigenvalues and (unit) eigenvectors are found with standard linear algebra solvers.

When the potential $V$ is known only up to an additive constant, then the approximate eigenfunctions produced by this method are not normalized correctly; instead, they satisfy $\norm{\phi}_{L^2(\pi)} = C$ for some constant $C$ (which is the same for each eigenfunction). In turn, this causes the kernel $K$ in \ref{eq:msvgd} to be off by a multiplicative constant. For implementation purposes, however, this constant is absorbed in the step size of Algorithm~\ref{ALG:MSVGD}. We also note that the eigenfunctions are differentiated using a FD approximation.

%For instance, when sampling from a Gaussian distribution, $\phi_i$ will be the $i$-th Hermite polynomial, with $\lambda_i = i$. In the general case, where $(\lambda_i, \phi_i)$ are not known, they can be approximated with the finite element method.

% To demonstrate, we sample from the mixture of Gaussians. First, we consider a mixture of two standard Gaussians with means $-2$ and $2$ in the 1D case. Here, the potential is given by $V(x) = -\ln\Big( e^{-\frac{1}{2}(x-2)^2} + e^{-\frac{1}{2}(x+2)^2} \Big)$. We approximate the eigenfunctions and eigenvalues using a finite difference scheme, on 256 grid points evenly spaced between -9 and 9. Constant step sizes for~\ref{eq:msvgd} and SVGD are chosen to yield good performance of 5000 iterations, and the samples are initialized to be uniform on [1, 4]. The results are displayed in Figure~\ref{fig:gaussmix}. We also measure convergence in $W_2$ to an empirical measure sampled from $\pi$, where we see no difference in rates between SVGD and~\ref{eq:msvgd}.

To demonstrate, we sample from a mixture of three Gaussians: $\frac{2}{5} \cN(-3, 1) + \frac{1}{5} \cN(0, 1) + \frac{2}{5} \cN(4, 2) $. We compare \ref{eq:msvgd} with SVGD using the RBF kernel and median-based bandwidth as in~\cite{liu2016stein}.
%Here, the potential is given by $V(x) = -\ln\Big( e^{-\frac{1}{2}(x+3)^2} + \frac{1}{2}e^{-\frac{1}{2}(x)^2} + \frac{1}{4}e^{-\frac{1}{8}(x-4)^2} \Big)$.
We approximate the eigenfunctions and eigenvalues using a finite difference scheme, on 256 grid points evenly spaced between $-14$ and $14$. Constant step sizes for~\ref{eq:msvgd} and SVGD are tuned and the algorithms are run for 5000 iterations, and the samples are initialized to be uniform on $[1, 4]$. The results are displayed in Figure~\ref{fig:gaussmix3}. All 256 discrete eigenfunctions and eigenvalues are used.

% We approximate the eigenvalues and eigenfunctions with the finite element method, using a grid of ... evenly spaced points on the range ... . For numerical stability, we instead approximate $\varphi_i(x)$, eigenfunctions of $\cH = \pi^{\frac{1}{2}} \cL (\pi^{-\frac{1}{2}}\cdot)$, the Schr\"odinger operator~\cite[\S 4.9]{pavliotis2014stochastic}. In our case, $\cH = -\Delta + V_s$, where $V_s = \frac{1}{4}\norm{\nabla V(x)}^2 - \frac{1}{2}\Delta V(x)$.

\section{Open questions}

We conclude this paper with some interesting open questions. The introduction of the chi-squared divergence as an objective function allows us to obtain both theoretical insights about SVGD and a new algorithm, \ref{eq:msvgd}. This perspective opens the possibility of identifying other functionals defined over Wasserstein space and that yield gradient flows which are amenable to mathematical analysis and efficient computation. Towards this goal, an intriguing direction is to develop alternative methods, besides kernelization, which provide effective implementations of Wasserstein gradient flows. Finally, we note that \ref{eq:msvgd} provides a hitherto unexplored connection between sampling and computing the spectral decomposition of the Schr\"odinger operator, the latter of which has been intensively studied in numerical PDEs. We hope our work further stimulates research at the intersection of these communities.

\medskip

\noindent\textbf{Acknowledgments}. \\
Philippe Rigollet was supported by NSF awards IIS-1838071, DMS-1712596, DMS-TRIPODS-1740751, and ONR
grant N00014-17- 1-2147.
Sinho Chewi and Austin J.\ Stromme were supported by the Department of
Defense (DoD) through the National Defense Science \& Engineering Graduate Fellowship (NDSEG)
Program.
Thibaut Le Gouic was supported by ONR grant N00014-17-1-2147 and NSF IIS-1838071.

\appendix

\section{Review of spectral theory} \label{sec:spectral}

In this paper, we consider elliptic differential operators of the form $\cL = -\Delta + \langle \nabla V, \nabla \cdot \rangle$, where $V$ is a continuously differentiable potential. In this section, we provide a brief review of the spectral theory of these operators, and we refer to~\cite[\S 6.5]{evans2010pde} for a standard treatment.

The operator $\cL$ (when suitably interpreted) is a linear operator defined on a domain $\cD\subset L^2(\pi)$.
For any locally Lipschitz function $f \in L^2(\pi)$, integration by parts shows that
\begin{align*}
    \E_\pi[f\cL f]
    &= \E_\pi[\norm{\nabla f}^2].
\end{align*}
Therefore, $\cL$ has a non-negative spectrum.
Also, we have $\cL 1=0$, so that $0$ is always an eigenvalue of $\cL$.
We say that $\cL$ has a \emph{discrete spectrum} if it has a countable sequence of eigenvalues $0 = \lambda_0 \le \lambda_1 \le \lambda_2 \le \lambda_3 \le \cdots $ and corresponding eigenfunctions ${(\phi_i)}_{i=1}^\infty$ which form a basis of $\cD$.
The eigenfunctions can be chosen to be orthogonal and normalized such that $\norm{\phi_i}_{L^2(\pi)} = 1$; we always assume this is the case.
Then, $\cL$ can be expressed as
\[
\cL=\sum_{i=1}^\infty \lambda_i \, \langle\phi_i,\cdot\rangle_{L^2(\pi)} \, \phi_i.
\]

The operator $\cL$ has a discrete spectrum under the following condition (\cite{friedrichsSpektraltheorieHalbbeschrankterOperatoren1934},~\cite[Theorem XIII.67]{reedsimon1978vol4},~\cite[Corollary 4.10.9]{bakry2014markov}):
\begin{align*}
    V_{\msf S} \in L^1_{\rm loc}(\R^d), \qquad \inf V_{\msf S} > -\infty, \qquad\text{and}\qquad \lim_{\norm x\to\infty} V_{\msf S}(x) = +\infty,
\end{align*}
where $V_{\msf S} := -\Delta V + \frac{1}{2} \norm{\nabla V}^2$.
Moreover, under this condition we also have $\lambda_i\to\infty$ as $i\to+\infty$.
For example, this condition is satisfied for $V(x)=\norm{x}^\alpha$ for $\alpha>1$, but not for $\alpha=1$.
In fact, for $\alpha=1$, the spectrum of $\cL$ is not discrete \cite[\S 4.1.1]{bakry2014markov}.

%We say that a non-constant function $\phi \in L^2(\pi)$ is an \emph{eigenfunction} of $\cL$ with \emph{eigenvalue} $\lambda$ if $\cL\phi = \lambda \phi$.
%We say that the operator $\cL$ has a \emph{discrete spectrum} if it has a countable sequence of eigenvalues $0 \le \lambda_1 \le \lambda_2 \le \lambda_3 \le \cdots \to \infty$ and corresponding eigenfunctions ${(\phi_i)}_{i=1}^\infty$ which form a basis of $L^2(\pi)$. The eigenfunctions can be chosen to be orthogonal and normalized such that $\norm{\phi_i}_{L^2(\pi)} = 1$; we always assume this is the case.

%Under mild conditions on the potential $V$, the operator $\cL$ has a discrete spectrum.
%This is true if, for example, the heat kernel of the Langevin diffusion associated with $\cL$ satisfies certain estimates near time $0$, see~\cite[Corollary 6.4.1]{bakry2014markov}.

The \emph{Poincar\'e inequality}~\eqref{eq:poincare} is interpreted as a \emph{spectral gap inequality}, since it asserts that $\lambda_1 = 1/C_{\msf P} > 0$.
Thus, under a Poincar\'e inequality, $\cL : \cD\cap\{f\in L^2(\pi) \mid \E_\pi f = 0\}\to L^2(\pi)$ is bijective. Moreover, if it has a discrete spectrum, its inverse satisfies
\[
\cL^{-1}=\sum_{i=1}^\infty \lambda_i^{-1}\, \langle\phi_i,\cdot\rangle_{L^2(\pi)} \, \phi_i.
\]

\section{Proofs of the convergence guarantees for the chi-squared flow}\label{sec:csf_proofs}

In this section, we give proofs of the convergence results we stated in Section~\ref{sec:csf}.

\begin{proof}[Proof of Theorem~\ref{thm:csf_chi_conv_poincare} (non-log-concave case)]
According to~\cite[Proposition 1]{chewi2020exponential}, the Poincar\'e inequality implies the following inequality for the chi-squared divergence:
\begin{align}
\label{eq:loja}
    {\chi^2(\mu \mmid \pi)}^{3/2} \le \frac{9C_{\msf P}}{4} \E_\mu\bigl[\bigl\lVert \nabla \frac{\D\mu}{\D\pi}\bigr\rVert^2\bigr], \qquad \forall \mu \ll \pi.
\end{align}
Since the Wasserstein gradient of $\chi^2(\cdot\mmid\pi)$ at $\mu$ is given by $2\nabla (\D \mu/\D \pi)$ (see Section~\ref{sec:wgf}), it yields
\begin{align*}
    \partial_t \chi^2(\mu_t \mmid \pi)
    &= -4\E_{\mu_t}\bigl[\bigl\lVert \nabla \frac{\D\mu_t}{\D\pi}\bigr\rVert^2] \le -\frac{16}{9C_{\msf P}} {\chi^2(\mu_t \mmid \pi)}^{3/2}
\end{align*}
Solving the above differential inequality yields
\begin{align*}
    \chi^2(\mu_t \mmid \pi)
    &\le \frac{\chi^2(\mu_0\mmid \pi)}{{\{1 + 8t\sqrt{\chi^2(\mu_0\mmid \pi)}/(9C_{\msf P}) \}}^2},
\end{align*}
which implies the desired result.
\end{proof}

We now prepare for the proof of exponentially fast convergence in chi-squared divergence for log-concave measures. The key to proving such results lies in differential inequalities of the form
\begin{align}\label{eq:chi_sq_pl}
    \chi^2(\mu \mmid \pi)
    &\le C_{\sf PL} \E_\mu\bigl[\bigl\lVert \nabla \frac{\D\mu}{\D\pi} \bigr\rVert^2\bigr], \qquad \forall \mu \ll \pi,
\end{align}
which may be interpreted as a \emph{Polyak-\L{}ojasiewicz \emph{(PL)} inequality}~\cite{karimi2016linear} for {$\chi^2(\cdot\mmid \pi)$}. 
PL inequalities are well-known in the optimization literature, and can be even used when the objective is not convex~\cite{CheMauRig20}.
In contrast, the preceding proof uses the weaker inequality~\eqref{eq:loja},
% \begin{align*}
%     {\chi^2(\mu \mmid \pi)}^{3/2}
%     &\lesssim \E_\mu\bigl[\bigl\lVert \nabla \frac{\D\mu}{\D\pi} \bigr\rVert^2\bigr] \qquad \forall \mu \ll \pi,
% \end{align*}
which may be interpreted as a \emph{\L{}ojasiewicz inequality}~\cite{lojasiewicz1963propriete}.

To see that a PL inequality readily yields exponential convergence, observe that
\begin{align*}
    \partial_t \chi^2(\mu_t \mmid \pi)
    &= -4\E_{\mu_t}\bigl[\bigl\lVert \nabla \frac{\D\mu_t}{\D\pi}\bigr\rVert^2] \le -\frac{4}{C_{\sf PL}} \chi^2(\mu_t \mmid \pi)\,.
\end{align*}
Together with Gr\"onwall's inequality, the differential inequality yields $\chi^2(\mu_t \mmid \pi) \le \chi^2(\mu_0 \mmid \pi) \, e^{-\frac{4t}{C_{\sf PL}}}$.

In order to prove a PL inequality of the type~\eqref{eq:chi_sq_pl}, we require two ingredients.
The first one is a transportation-cost inequality for the chi-squared divergence proven in~\cite{liu2020quadratictransport}, building on the works~\cite{ding2015quadratictransport, ledoux2018remarks}. It asserts that if $\pi$ satisfies a Poincar\'e inequality~\eqref{eq:poincare}, then the following inequality holds:
\begin{align}\label{eq:chi_sq_transport}
    W_2^2(\mu, \pi)
    &\le 2C_{\msf P} \chi^2(\mu \mmid \pi), \qquad\forall \mu \ll \pi.
\end{align}
For the second ingredient, we use an argument of~\cite{otto2000generalization} to show that if $\pi$ satisfies a chi-squared transportation-cost inequality such as~\eqref{eq:chi_sq_transport}, and in addition is log-concave, then it satisfies an inequality of the type~\eqref{eq:chi_sq_pl}. We remark that the converse statement, that is, if $\pi$ satisfies a PL inequality~\eqref{eq:chi_sq_pl} then it satisfies an appropriate chi-squared transportation-cost inequality, was proven in~\cite{chewi2020exponential} without the additional assumption of log-concavity. It implies that for log-concave distributions, the PL inequality~\eqref{eq:chi_sq_pl} and the 
chi-squared transportation-cost inequality~\eqref{eq:chi_sq_transport} are, in fact, equivalent.

\begin{thm}\label{thm:chi2transport2Poincare}
Let $\pi$ be log-concave, and assume that for some $q \in (1,\infty)$ and a constant ${\sf C} > 0$,
    \[
    W_2^2(\mu,\pi)\le {\sf C} {\chi^2(\mu\mmid\pi)}^{2/q}, \qquad \forall \ \mu \ll \pi.
    \]
    Then,
    \begin{equation}\label{eq:chi2LSI}
    {\chi^2(\mu\mmid \pi)}^{2/p}\le 4{\sf C}\E_\mu\bigl[\bigl\lVert \nabla\frac{\D\mu}{\D\pi}\bigr\rVert^2\bigr], \qquad \forall \ \mu \ll \pi,
    \end{equation}
    where $p$ satisfies $1/p + 1/q = 1$.
\end{thm}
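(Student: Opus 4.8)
The strategy is to follow the Otto--Villani template: first derive an HWI-type inequality for the chi-squared divergence that is valid under log-concavity alone, and then eliminate the Wasserstein distance between it and the hypothesized transportation-cost inequality. Recalling from~\eqref{eq:w2_grads} that $\nabla_{W_2}\chi^2(\cdot\mmid\pi)(\mu)=2\nabla(\D\mu/\D\pi)$, so that $\lVert\nabla_{W_2}\chi^2(\cdot\mmid\pi)(\mu)\rVert_{L^2(\mu)}^2=4\,\E_\mu[\lVert\nabla(\D\mu/\D\pi)\rVert^2]$, the inequality I aim to establish is
\[
\chi^2(\mu\mmid\pi)\le 2\,W_2(\mu,\pi)\,\sqrt{\E_\mu\bigl[\bigl\lVert\nabla\tfrac{\D\mu}{\D\pi}\bigr\rVert^2\bigr]},\qquad\forall\,\mu\ll\pi.
\]
Granting this, the theorem is immediate: inserting the hypothesis in the form $W_2(\mu,\pi)\le\sqrt{\sf C}\,{\chi^2(\mu\mmid\pi)}^{1/q}$ and dividing through by ${\chi^2(\mu\mmid\pi)}^{1/q}$ gives ${\chi^2(\mu\mmid\pi)}^{1-1/q}\le 2\sqrt{\sf C}\,\sqrt{\E_\mu[\lVert\nabla(\D\mu/\D\pi)\rVert^2]}$, and since $1-1/q=1/p$, squaring produces exactly~\eqref{eq:chi2LSI}.

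The heart of the argument is therefore the displayed HWI inequality, which I would obtain by a Taylor expansion of $\chi^2(\cdot\mmid\pi)$ along the $W_2$-geodesic joining $\mu$ to $\pi$. The key structural input is that $\chi^2(\cdot\mmid\pi)$ is geodesically convex whenever $\pi$ is log-concave. Indeed, $\chi^2(\cdot\mmid\pi)$ is a relative internal-energy functional with nonlinearity $U(r)=r^2$, which lies in McCann's displacement-convexity class, and its pressure $p(r)=rU'(r)-U(r)=r^2\ge0$ is nonnegative; by the second-variation formula for such functionals on the weighted space $(\R^d,e^{-V})$ (McCann, Otto--Villani, and the Lott--Sturm--Villani theory), the geodesic Hessian is the sum of a nonnegative internal term and a curvature term proportional to $\nabla^2 V\succeq0$, hence is nonnegative. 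Letting $(\mu_s)_{s\in[0,1]}$ be the constant-speed geodesic with $\mu_0=\mu$, $\mu_1=\pi$, and initial velocity $v_0$ (so that $\lVert v_0\rVert_{L^2(\mu)}=W_2(\mu,\pi)$), geodesic convexity yields $\chi^2(\mu_1\mmid\pi)\ge\chi^2(\mu_0\mmid\pi)+\frac{\D}{\D s}\big|_{s=0}\chi^2(\mu_s\mmid\pi)$. Since $\chi^2(\mu_1\mmid\pi)=\chi^2(\pi\mmid\pi)=0$ and, by Cauchy--Schwarz, $\frac{\D}{\D s}\big|_{s=0}\chi^2(\mu_s\mmid\pi)=\langle\nabla_{W_2}\chi^2(\cdot\mmid\pi)(\mu),v_0\rangle_{L^2(\mu)}\ge-\lVert\nabla_{W_2}\chi^2(\cdot\mmid\pi)(\mu)\rVert_{L^2(\mu)}\,W_2(\mu,\pi)$, rearranging produces the HWI inequality; the omitted curvature contribution $-\tfrac12\kappa W_2^2\le0$ only strengthens it.

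I expect the main obstacle to be making this second-variation argument fully rigorous rather than the algebra that follows. Two points require care: (i) establishing geodesic convexity of $\chi^2(\cdot\mmid\pi)$ in the required generality, which in the weighted setting rests on the regularity theory of optimal transport and on controlling the internal-energy contribution to the Hessian; and (ii) justifying differentiation of $\chi^2(\cdot\mmid\pi)$ along the geodesic at $s=0$ and the identification of its derivative with $\langle\nabla_{W_2}\chi^2(\cdot\mmid\pi)(\mu),v_0\rangle_{L^2(\mu)}$. The standard remedy is an approximation scheme: prove the HWI inequality first for smooth, suitably regular $\mu$ for which the geodesic and its velocity field are well behaved, and then pass to the limit for general $\mu$ with $\chi^2(\mu\mmid\pi)<\infty$, using lower semicontinuity of the chi-squared divergence and of the dissipation $\E_\mu[\lVert\nabla(\D\mu/\D\pi)\rVert^2]$. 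When either side of~\eqref{eq:chi2LSI} is infinite the inequality is trivial, so this poses no additional difficulty.
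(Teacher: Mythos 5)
Your proposal is correct and follows essentially the same route as the paper's proof: an Otto--Villani argument using displacement convexity of $\chi^2(\cdot \mmid \pi)$ under log-concavity (which the paper cites from the generalized-entropies literature rather than re-deriving via McCann's class), the above-tangent inequality along the geodesic from $\mu$ to $\pi$ with initial velocity $T - \id$, Cauchy--Schwarz to obtain the HWI-type bound $\chi^2(\mu \mmid \pi) \le 2 W_2(\mu,\pi) \sqrt{\E_\mu[\lVert \nabla (\D\mu/\D\pi)\rVert^2]}$, and then substitution of the assumed transportation-cost inequality followed by rearrangement. Your closing algebra ($1 - 1/q = 1/p$, then squaring) matches the paper exactly, and your remarks on regularity and approximation are a sound, if more detailed, treatment of points the paper handles by citation.
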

\begin{proof}
Following~\cite{otto2000generalization}, let $T$ be the optimal transport map from $\mu$ to $\pi$.
Since $\chi^2(\cdot\mmid\pi)$ is displacement convex~\cite{ohta2011generalizedentropies, ohta2013generalizedentropiesii} and has Wasserstein gradient $2\nabla(\D \mu/\D\pi)$ at $\mu$ (c.f.\ Section~\ref{sec:wgf}), the ``above-tangent'' formulation of displacement convexity (\cite[Proposition 5.29]{villani2003topics}) yields
\begin{align*}
    0
    &= \chi^2(\pi \mmid \pi)
    \ge \chi^2(\mu \mmid \pi) + 2\E_\mu\bigl\langle \nabla \frac{\D\mu}{\D\pi}, T - \id\bigr\rangle
    \ge \chi^2(\mu \mmid \pi) - 2W_2(\mu,\pi) \sqrt{\E_\mu\bigl[\bigl\lVert \nabla \frac{\D\mu}{\D\pi}\bigr\rVert^2 \bigr]},
\end{align*}
where we used the Cauchy-Schwarz inequality for the last inequality.
Rearranging the above display and using the  transportation-cost inequality assumed in the statement of theorem, we get
\begin{align*}
    \chi^2(\mu\mmid \pi)
    &\le 2W_2(\mu,\pi) \sqrt{\E_\mu\bigl[\bigl\lVert \nabla \frac{\D\mu}{\D\pi}\bigr\rVert^2 \bigr]}
    \le 2\sqrt{{\sf C}\E_\mu\bigl[\bigl\lVert \nabla \frac{\D\mu}{\D\pi}\bigr\rVert^2 \bigr]} \, {\chi^2(\mu\mmid \pi)}^{1/q}.
\end{align*}
The result follows by rearranging the terms.
\end{proof}

\begin{proof}[Proof of Theorem~\ref{thm:csf_chi_conv_poincare} (log-concave case)]
    From the transportation-cost inequality~\eqref{eq:chi_sq_transport} and Theorem~\ref{thm:chi2transport2Poincare} with $p=q=2$, we obtain
    \begin{align*}
        \chi^2(\mu\mmid\pi)
        &\le 8C_{\msf P} \E_\mu\bigl[\bigl\lVert \nabla \frac{\D\mu}{\D\pi} \bigr\rVert^2].
    \end{align*}
This PL inequality together with Gr\"onwall's inequality readily yields the result.
\end{proof}

We conclude this section with the proof of Theorem~\ref{thm:csf_conv_lsi}, which shows exponential convergence of \ref{eq:csf} in chi-squared divergence under the assumption of a log-Sobolev inequality~\eqref{eq:lsi} (but without the assumption of log-concavity).

\begin{proof}[Proof of Theorem~\ref{thm:csf_conv_lsi}]
We first claim that
    \begin{align}\label{eq:deriv_chi_sq_lsi}
        \partial_t \chi^2(\mu_t\mmid \pi)
        &\le - \frac{4}{9C_{\msf{LSI}}} {[\chi^2(\mu_t\mmid \pi) +1]}^{3/2} \ln[\chi^2(\mu_t\mmid \pi) + 1].
    \end{align}
    Indeed, applying~\eqref{eq:lsi}, we obtain
    \begin{align*}
        \partial_t \chi^2(\mu_t\mmid \pi)
        &= - 4 \int \bigl\lVert \nabla \frac{\D\mu_t}{\D\pi} \bigr\rVert^2 \, \D \mu_t
        = - \frac{16}{9} \int \bigl\lVert \nabla \bigl\lvert \frac{\D\mu_t}{\D\pi} \bigr\rvert^{3/2} \bigr\rVert^2 \, \D \pi
        \le - \frac{8}{9C_{\msf{LSI}}} \on{ent}_\pi\bigl(\bigl\lvert \frac{\D\mu_t}{\D\pi} \bigr\rvert^3\bigr).
    \end{align*}
    Next, the variational formula for the entropy gives
    \begin{align*}
        \on{ent}_\pi f
        &= \sup\{\E_\pi(fg) : g~\text{satisfies}~\E_\pi\exp g = 1\},
    \end{align*}
    see~\cite[Lemma 3.15]{vanhandelProbabilityHighDimension2014} or~\cite[Theorem 4.13]{boucheron2013concentration}.
 Choosing $g = \ln(\D\mu_t/\D\pi)$ yields
    \begin{align*}
        \on{ent}_\pi\bigl(\bigl\lvert \frac{\D\mu_t}{\D\pi} \bigr\rvert^3\bigr)
        &\ge \E_\pi\bigl[\bigl\lvert \frac{\D\mu_t}{\D\pi} \bigr\rvert^3 \ln \frac{\D\mu_t}{\D\pi} \bigr]
        = \frac{1}{3} \E_\pi\bigl[\bigl\lvert \frac{\D\mu_t}{\D\pi} \bigr\rvert^3 \ln\bigl( \bigl\lvert \frac{\D\mu_t}{\D\pi} \bigr\rvert^3\bigr)\bigr] \\
        &\ge \frac{1}{3} \E_\pi\bigl[ \bigl\lvert \frac{\D\mu_t}{\D\pi} \bigr\rvert^3 \bigr] \ln \E_\pi\bigl[ \bigl\lvert \frac{\D\mu_t}{\D\pi} \bigr\rvert^3\bigr]\\
        &\ge \frac{1}{2} \E_\pi\bigl[ \bigl\lvert \frac{\D\mu_t}{\D\pi} \bigr\rvert^2 \bigr]^{3/2} \ln \E_\pi\bigl[ \bigl\lvert \frac{\D\mu_t}{\D\pi} \bigr\rvert^2\bigr]\\
        &= \frac{1}{2} {[\chi^2(\mu_t\mmid\pi) + 1]}^{3/2} \ln[\chi^2(\mu_t\mmid \pi) + 1],
    \end{align*}
    where in the second inequality, we used  that $x\mapsto x\ln x$ is convex on $\R_+$ and in the third, we used that it increasing when $x\ge 1$ together with 
    \[
    \E_\pi\bigl[ \bigl\lvert \frac{\D\mu_t}{\D\pi} \bigr\rvert^2\bigr] = 1 +\chi^2(\mu_t\mmid\pi)  \ge 1.
    \]
    This proves~\eqref{eq:deriv_chi_sq_lsi}.
    
    To simplify the inequality~\eqref{eq:deriv_chi_sq_lsi}, we use the crude bounds
    \[
    \ln[\chi^2(\mu_t\mmid\pi) + 1] \ge \begin{cases}
     1, & \text{if}~\chi^2(\mu_t\mmid\pi) \ge \e - 1\\
     \chi^2(\mu_t\mmid\pi)/2,& \text{otherwise}.
    \end{cases}
    \]
    % $\ln[\chi^2(\mu_t\mmid\pi) + 1] \ge 1$ if $\chi^2(\mu_t\mmid\pi) \ge \e - 1$, and $\ln[\chi^2(\mu_t\mmid \pi) + 1] \ge \chi^2(\mu_t\mmid\pi)/2$ if $\chi^2(\mu_t\mmid\pi) \le \e - 1$.
It yields respectively
    \begin{align}\label{eq:chi_sq_lsi}
            \partial_t \chi^2(\mu_t\mmid \pi) \le - \frac{2}{9C_{\msf{LSI}}} \begin{cases}
            2 {\chi^2(\mu_t\mmid\pi)}^{3/2}, & \text{if}~\chi^2(\mu_t\mmid\pi) \ge \e - 1, \\
            \chi^2(\mu_t\mmid\pi), &\text{otherwise.}
        \end{cases}
    \end{align}
    
Solving the differential inequality in the first case yields
    \begin{align*}
      e-1\le   \chi^2(\mu_t\mmid\pi)
        &\le \Bigl[ \frac{9C_{\msf{LSI}} \sqrt{\chi^2(\mu_0\mmid\pi)}}{9C_{\msf{LSI}} + 2t\sqrt{\chi^2(\mu_0 \mmid \pi)}} \Bigr]^2 \le \Bigl[ \frac{9C_{\msf{LSI}}}{2t} \Bigr]^2,
    \end{align*}
    % Setting the RHS equal to $\e - 1$ yields
    % \begin{align*}
    %     t
    %     &= \frac{9c}{2\sqrt{\e-1}} \, \frac{\sqrt{\chi^2(\mu_0,\pi)} - \sqrt{\e - 1}}{\sqrt{\chi^2(\mu_0, \pi)}}
    %     \le \frac{9c}{2\sqrt{\e - 1}}.
    % \end{align*}
    so that in this first case, it must holds that
    % This implies that $\chi^2(\mu_t \mmid \pi) \le \e - 1$ by time
    \begin{align*}
        t \le \frac{9C_{\msf{LSI}}}{2 \sqrt{\e - 1}}
        < 3.5C_{\msf{LSI}}=:t_0.
    \end{align*}
Therefore, if $t \ge t_0$, we are in the second case. In particular, $ \chi^2(\mu_{t_0}\mmid \pi) \le e-1 \le 2$ and integrating the differential inequality between $t_0$ and $t$ we get
    $$
 \chi^2(\mu_t\mmid \pi) \le  \chi^2(\mu_{t_0}\mmid \pi) \, e^{-\frac{2(t-t_0)}{9C_{\sf LSI}}} \le  \big(\chi^2(\mu_{0}\mmid \pi) \wedge 2\big) \, e^{-\frac{2(t-t_0)}{9C_{\sf LSI}}}\,,
    $$
    where in the last inequality, we used the fact that $t \mapsto \chi^2(\mu_{t}\mmid \pi) $ is decreasing so that it also holds $\chi^2(\mu_{t_0}\mmid \pi) \le \chi^2(\mu_{0}\mmid \pi)$.
    In particular, taking $t \ge 2t_0=7C_{\msf{LSI}}$ yields the desired result.
    % Moreover, we also have
    % $$
    %  \partial_t \chi^2(\mu_t\mmid \pi) \le -  \frac{2\sqrt{e-1}}{9C_{\msf{LSI}}} {\chi^2(\mu_t\mmid\pi)}
    % $$
    % which yields
    % $$
    % l
    % $$
    % Hence, if $t\ge 7C_{\msf{LSI}}$, then
    % \begin{align*}
    %     \chi^2(\mu_t \mmid \pi)
    %     &\le (\e - 1) \, \exp\big(- \frac{2t-7 C_{\msf{LSI}}}{9C_{\msf{LSI}}}\big)
    %     \le (\e - 1) \, \e^{-\frac{t}{9C_{\msf{LSI}}}}.
    % \end{align*}
    % On the other hand, if $\chi^2(\mu_0 \mmid \pi) \le \e - 1$, then~\eqref{eq:chi_sq_lsi} implies
    % \begin{align*}
    %     \chi^2(\mu_t \mmid \pi)
    %     &\le \chi^2(\mu_0 \mmid \pi) \, \e^{-2t/(9C_{\msf{LSI}})}.
    % \end{align*}
    % These two bounds yield, for $t \ge 7C_{\msf{LSI}}$,
    % \begin{align*}
    %     \chi^2(\mu_t \mmid \pi)
    %     &\le \bigl(\chi^2(\mu_0 \mmid \pi) \wedge (\e - 1)\bigr) \, \e^{-t/(9C_{\msf{LSI}})}
    %     \le \bigl(\chi^2(\mu_0 \mmid \pi) \wedge 2\bigr) \, \e^{-t/(9C_{\msf{LSI}})},
    % \end{align*}
    % which is what we wanted to show.
\end{proof}

% \section{Proof of the convergence rate of ideal MSVGD}\label{sec:msvgd_proof}

% In this section we prove Theorem~\ref{thm:msvgd_conv}. We refer readers to Appendix~\ref{sec:spectral} for a review of the spectral theory used in this proof.

% \begin{proof}[Proof of Theorem~\ref{thm:msvgd_conv}]
%     The ideal kernel $K$ is chosen so that $\cK_\pi = L^{-1}$; thus,
%     \begin{align*}
%         \partial_t D_{\rm KL}(\mu_t \mmid \pi)
%         &= -\E_\pi\bigl\langle \nabla \frac{\D\mu_t}{\D\pi}, \nabla \cK_\pi \frac{\D\mu_t}{\D\pi} \bigr\rangle
%         = -\E_\pi\bigl[\frac{\D\mu_t}{\D\pi} L\cK_\pi \frac{\D\mu_t}{\D\pi}\bigr]
%         = -\E_\pi\bigl[\bigl( \frac{\D\mu_t}{\D\pi} - 1\bigr)^2\bigr] \\
%         &= -\chi^2(\mu_t\mmid \pi).
%     \end{align*}
%     The proof is now concluded as in Theorem~\ref{thm:csf_kl_conv}.
% \end{proof}

\section{Details for the experiments}\label{append:numerics}

We give additional details for the experiments presented in this paper. All methods were implemented in \texttt{Python}. Since the Schr\"odinger operator requires the Laplacian and gradient of the potential $V$, we employ automatic differentiation to avoid laborious calculations of these derivatives.

The \emph{probabilists'} Hermite polynomials are well-known to be eigenfunctions of the 1D Ornstein-Uhlenbeck operator $\ms L$ given by $\ms Lf(x) := -f''(x) + xf'(x)$, and they satisfy the recursive relationship $H_{n+1}(x) = xH_n(x) -  nH_{n-1}(x)$, with $H_0(x) = 1$ and $H_1(x) = x$. It also holds that $H_n'(x) = n H_{n-1}(x)$. With these equations, it is easy to check that the eigenvalue corresponding to $H_n$ is $\lambda_n = n$. These are used as the eigenfunctions and eigenvalues in the standard normal example given in Figure~\ref{fig:gaussian}. In the simulation, we use the first 150 Hermite polynomials. We run \ref{eq:msvgd} for 2000 iterations with a constant step size, with initial points drawn uniformly from the interval $[2.5, 4.5]$.

In Figure~\ref{fig:gaussmix2d}, we display an example of sampling 50 particles from a mixture of two 2-dimensional Gaussian distributions given by $\pi = \frac{1}{2} \mc N((-1,-1)^\top, I_2) + \frac{1}{2} \mc N((1,1)^\top, I_2)$. 
To run this experiment, we use a 2-dimensional FD method, which approximates the Laplacian as 
$$\Delta_\varepsilon f(x,y) := \frac{f(x-\varepsilon, y) + f(x+\varepsilon, y) + f(x, y - \varepsilon) + f(x, y+\varepsilon) - 4f(x)}{\varepsilon^2}.$$
We again use the Schr\"odinger operator for stability and use FD again to compute the gradients of the eigenfunctions. We use a $128\times 128$ grid of evenly spaced $x$ and $y$ values between $-6$ and $6$. We calculate only the bottom 100 eigenvalues and eigenfunctions,  since the other eigenfunctions incur additional computational cost without noticeably changing the result. Any negative eigenvalues (which arise from numerical errors) are discarded.

\begin{figure}[H]
    \centering
    \includegraphics[width = 0.31\textwidth]{trajectories_law_2d.pdf}
    \includegraphics[width = 0.31\textwidth]{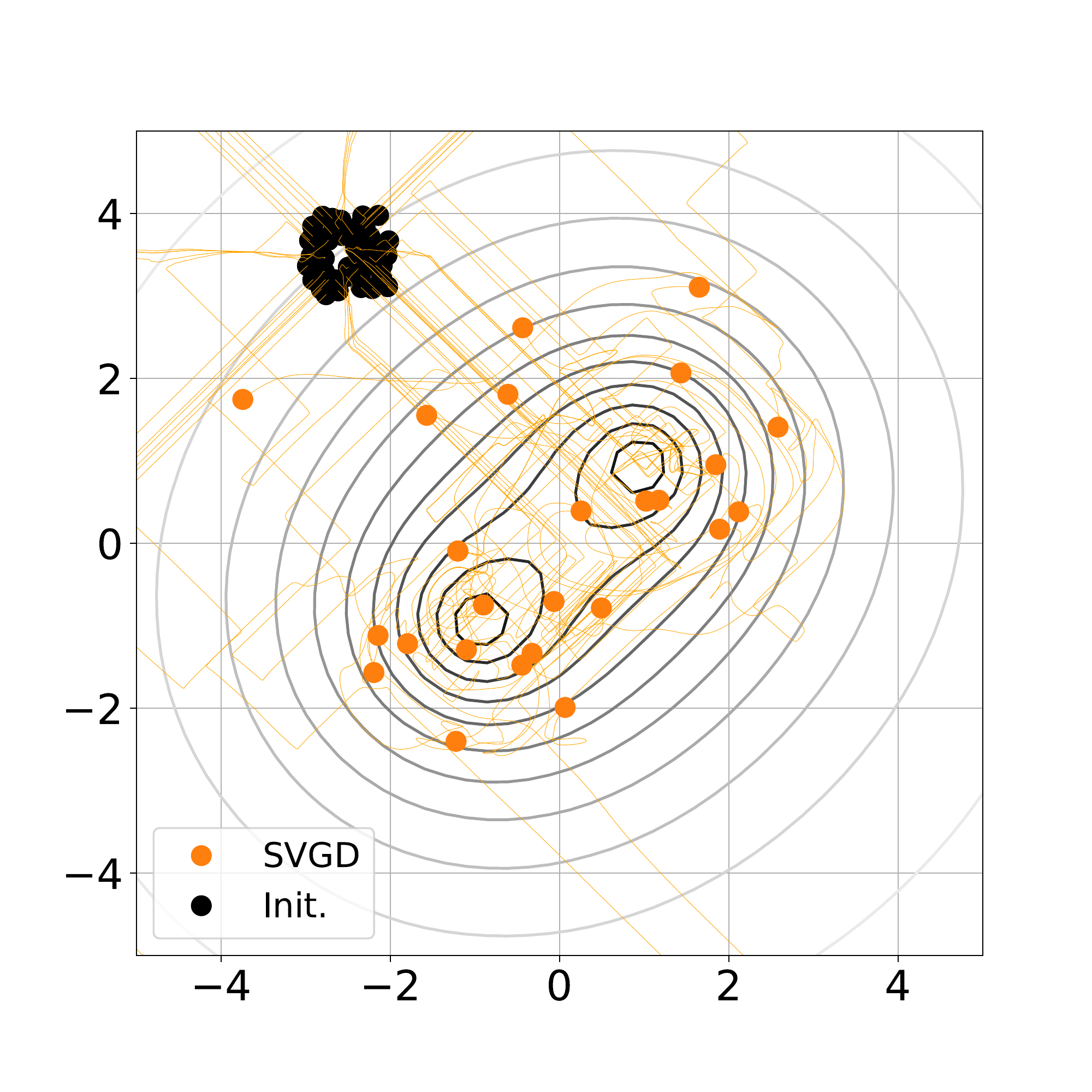}
    \includegraphics[width = 0.31\textwidth]{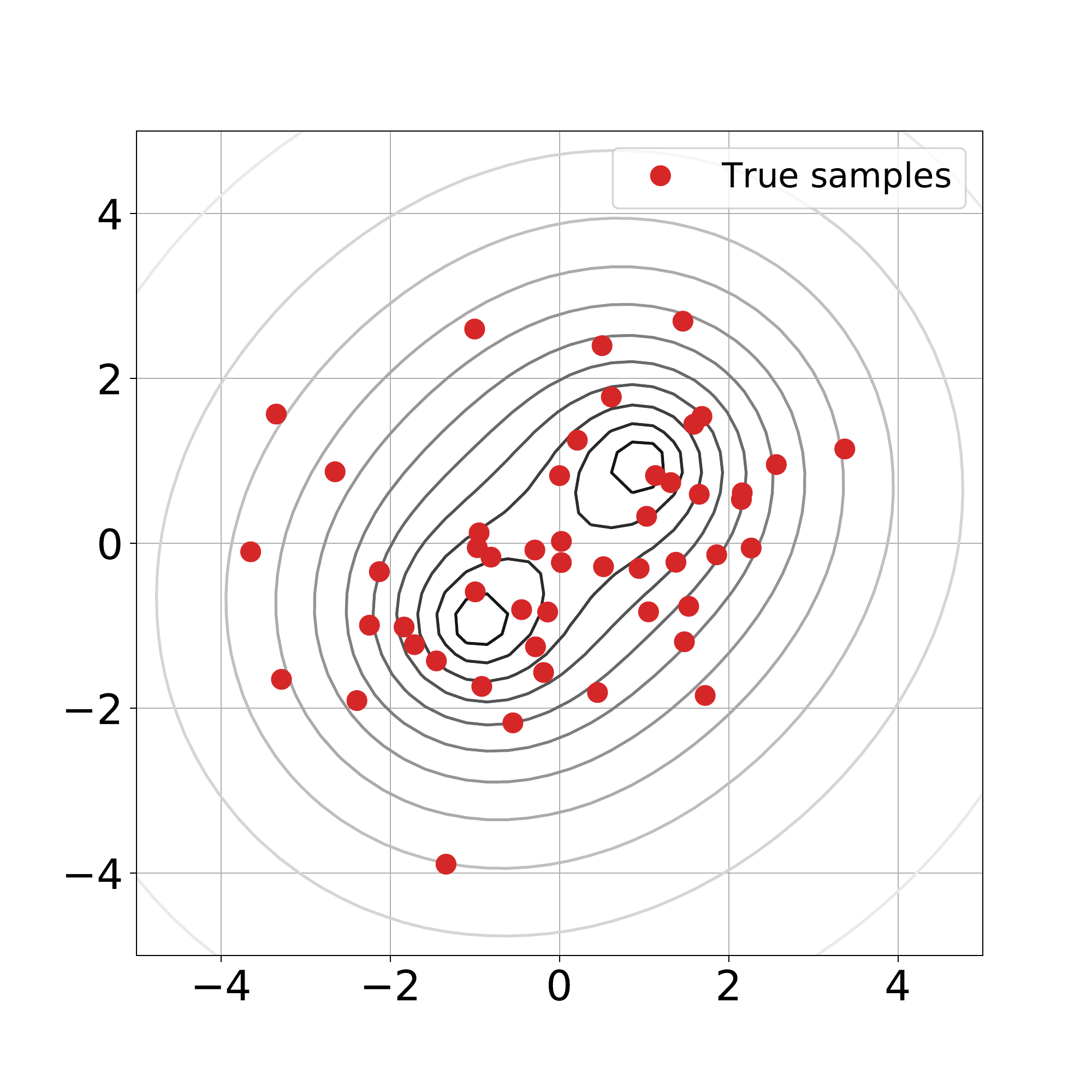}
    \caption{Left: 50 particles and trajectories generated from $\frac{1}{2} \mc N((-1,-1)^\top, I_2) + \frac{1}{2} \mc N((1,1)^\top, I_2)$ with \ref{eq:msvgd}. Middle: 50 particles and trajectories generated by SVGD. Right: true samples from the distribution.}
    \label{fig:gaussmix2d_2}
\end{figure}

\begin{figure}[h]
    \centering
    \includegraphics[width=0.23\textwidth]{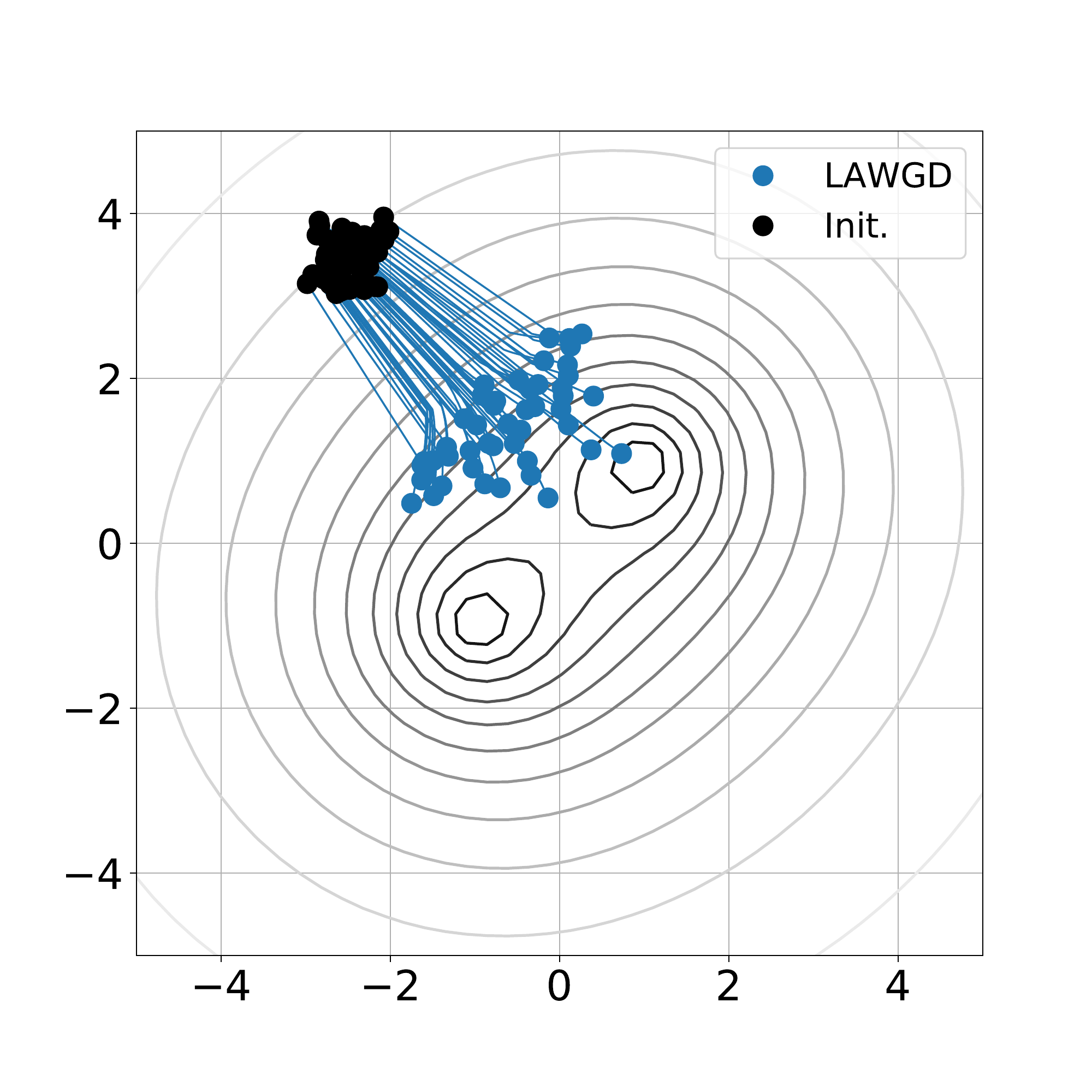}
    \includegraphics[width=0.23\textwidth]{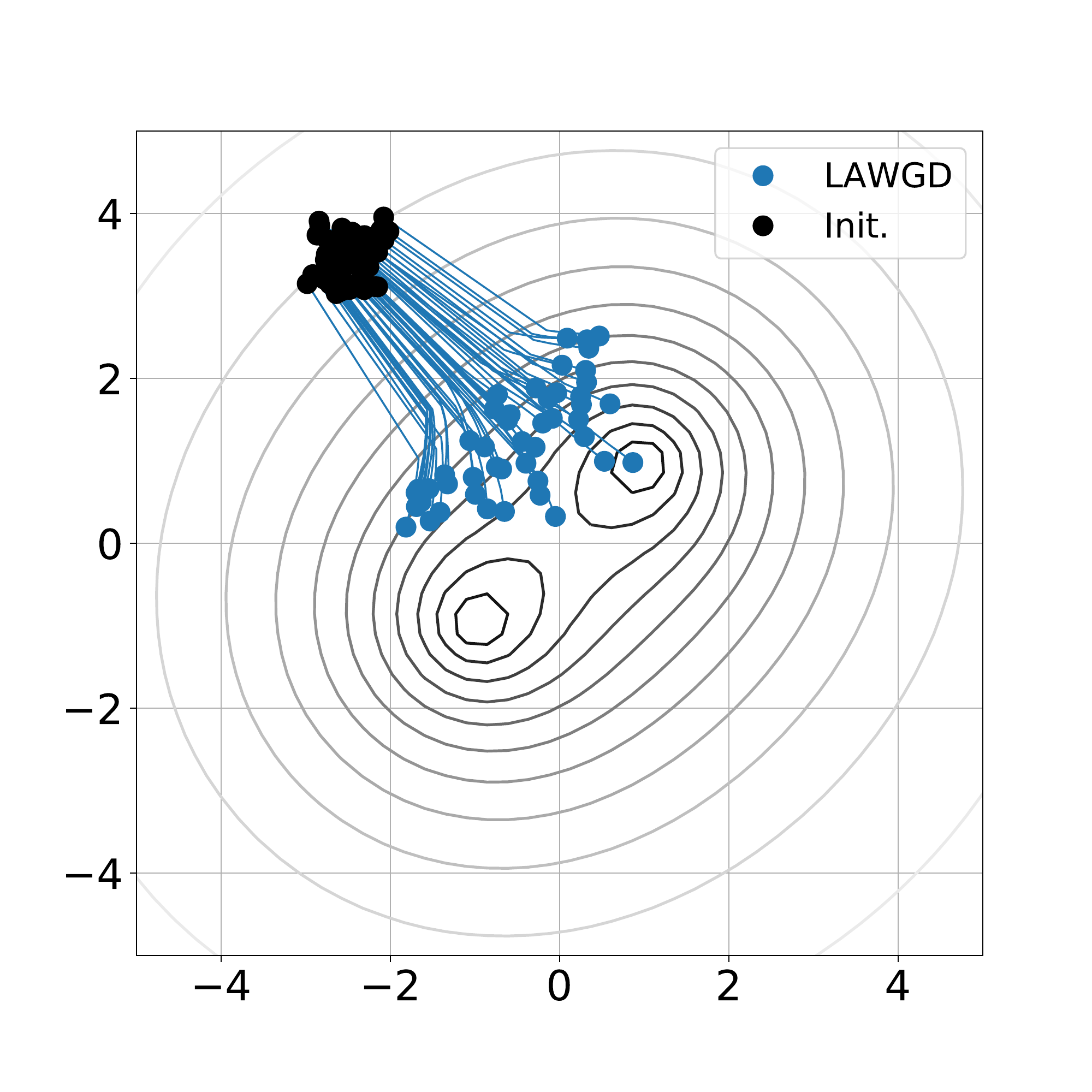}
    \includegraphics[width=0.23\textwidth]{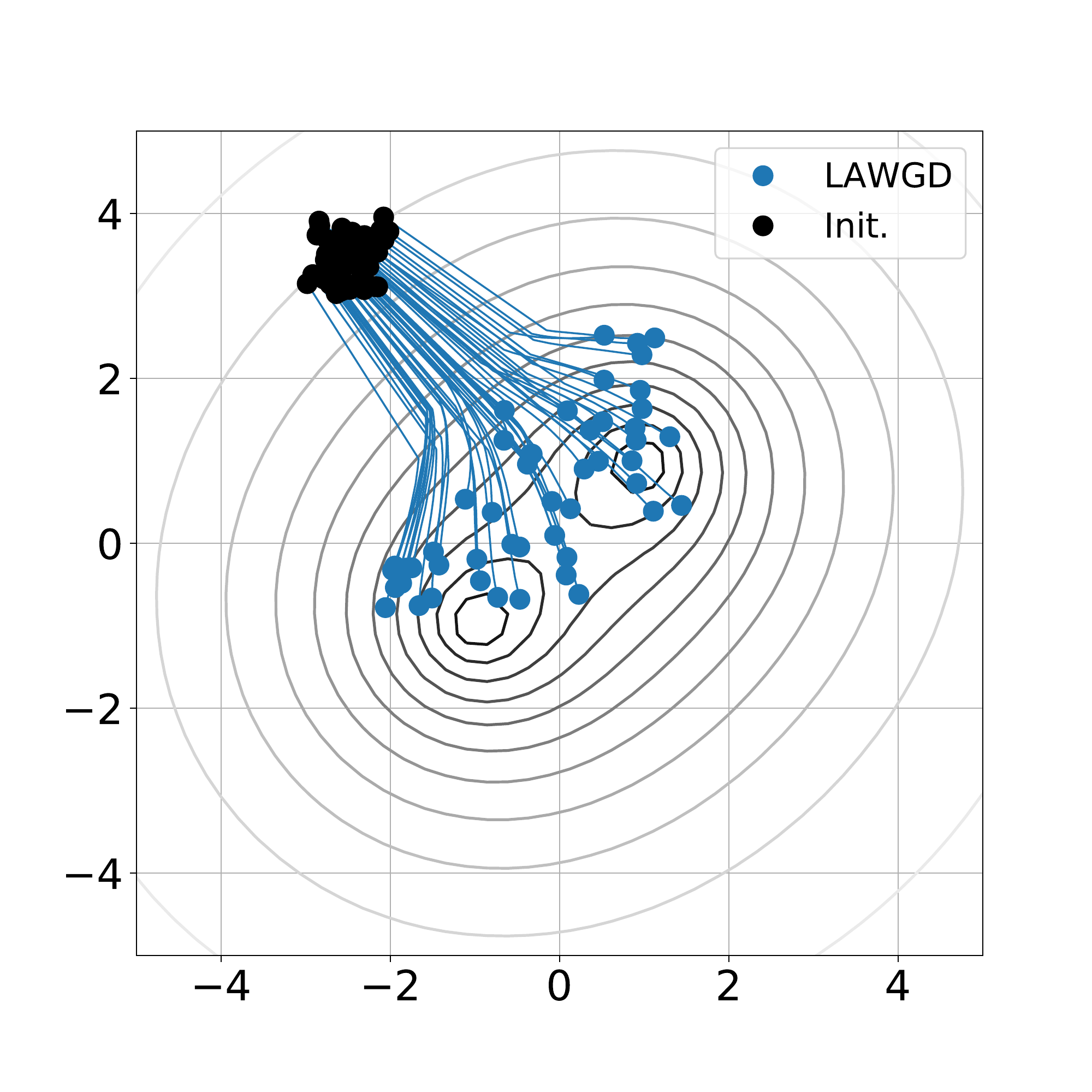}
    \includegraphics[width=0.23\textwidth]{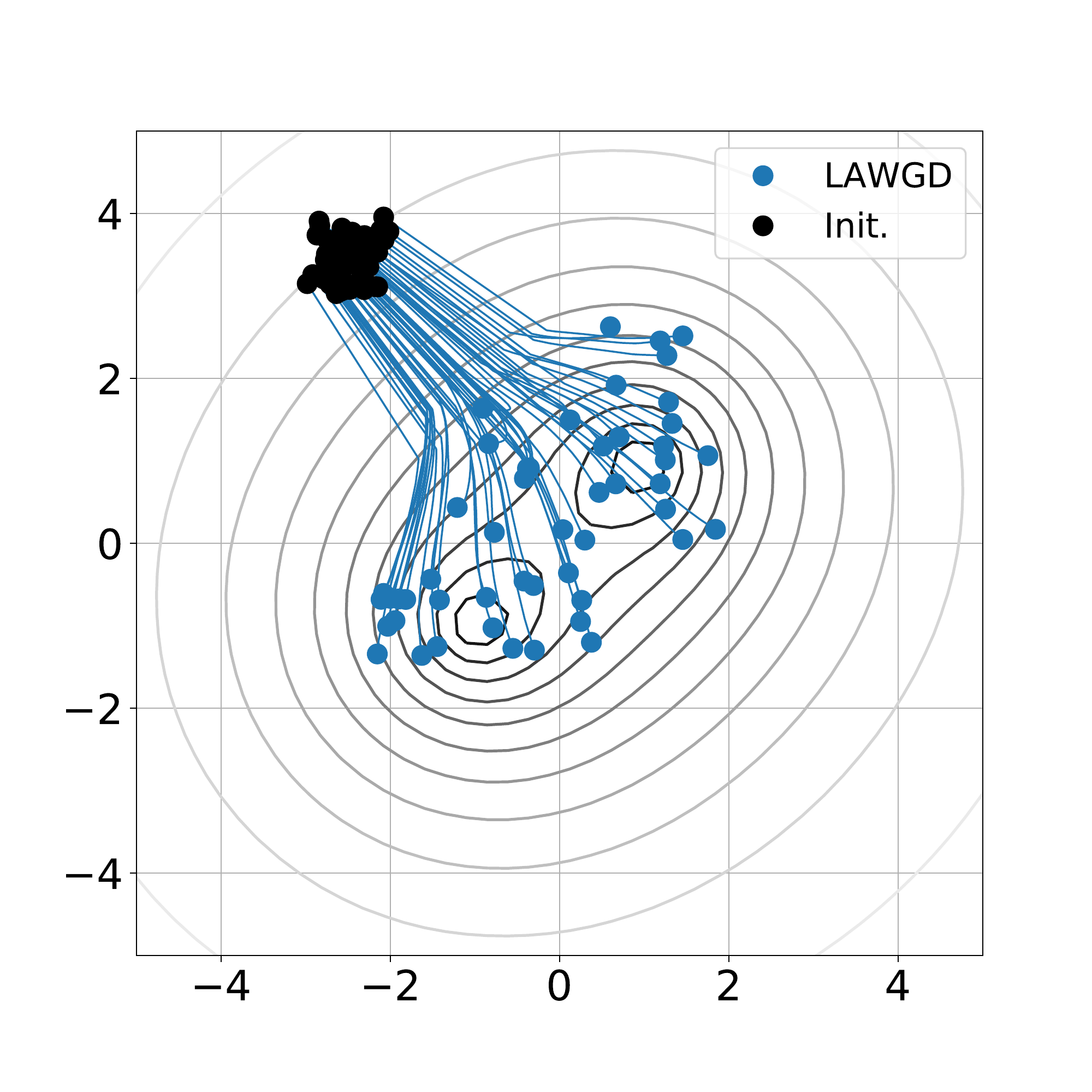}
    \includegraphics[width=0.23\textwidth]{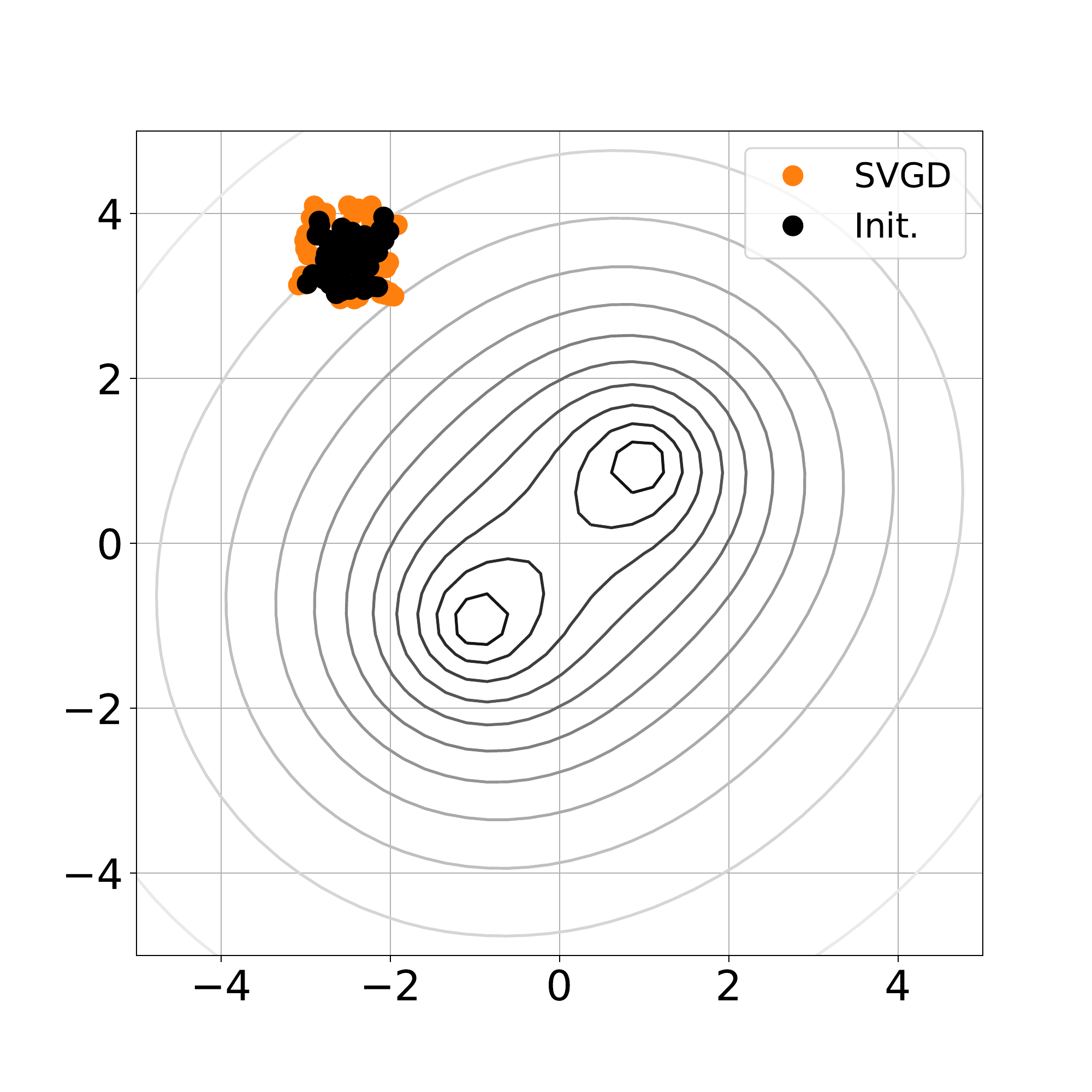}
    \includegraphics[width=0.23\textwidth]{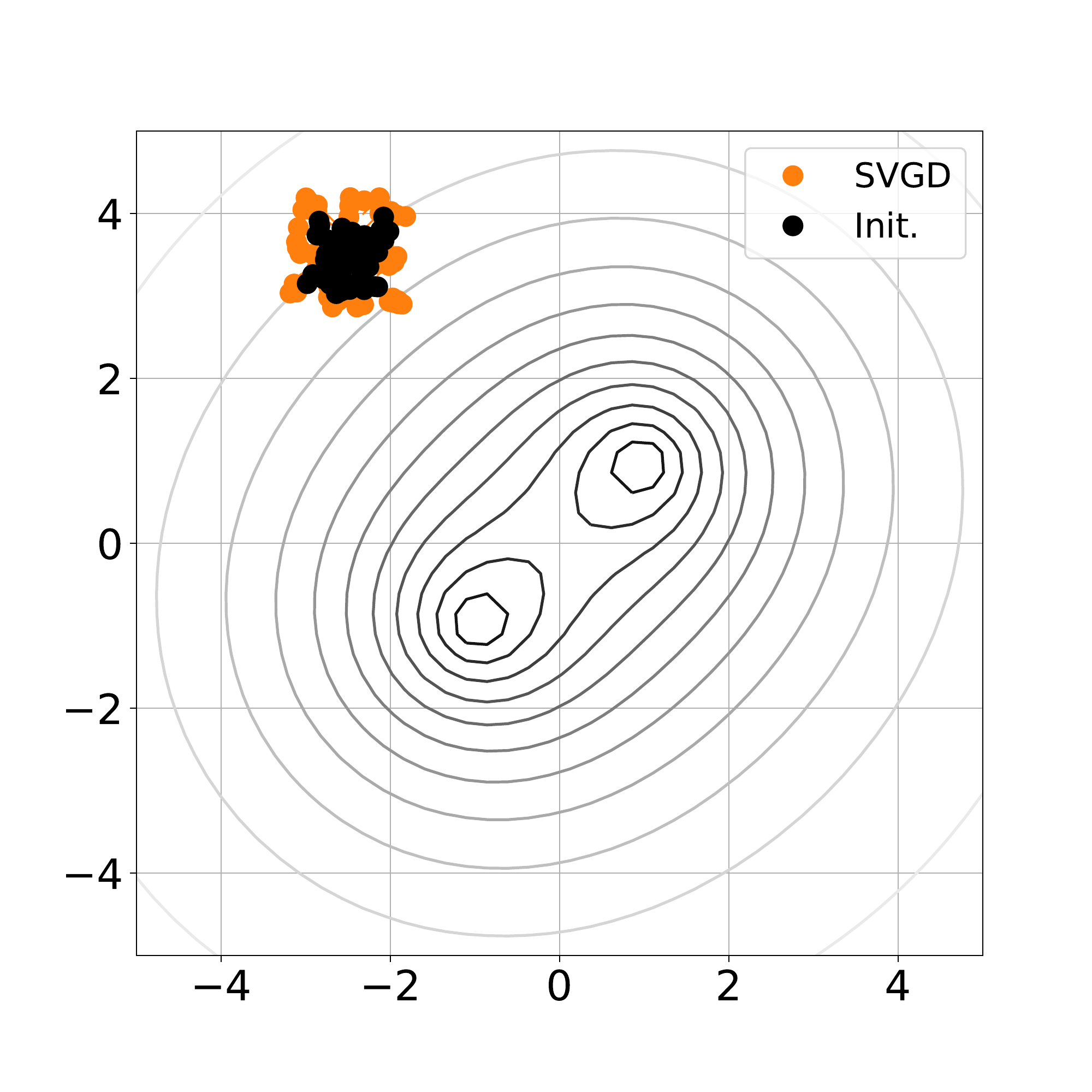}
    \includegraphics[width=0.23\textwidth]{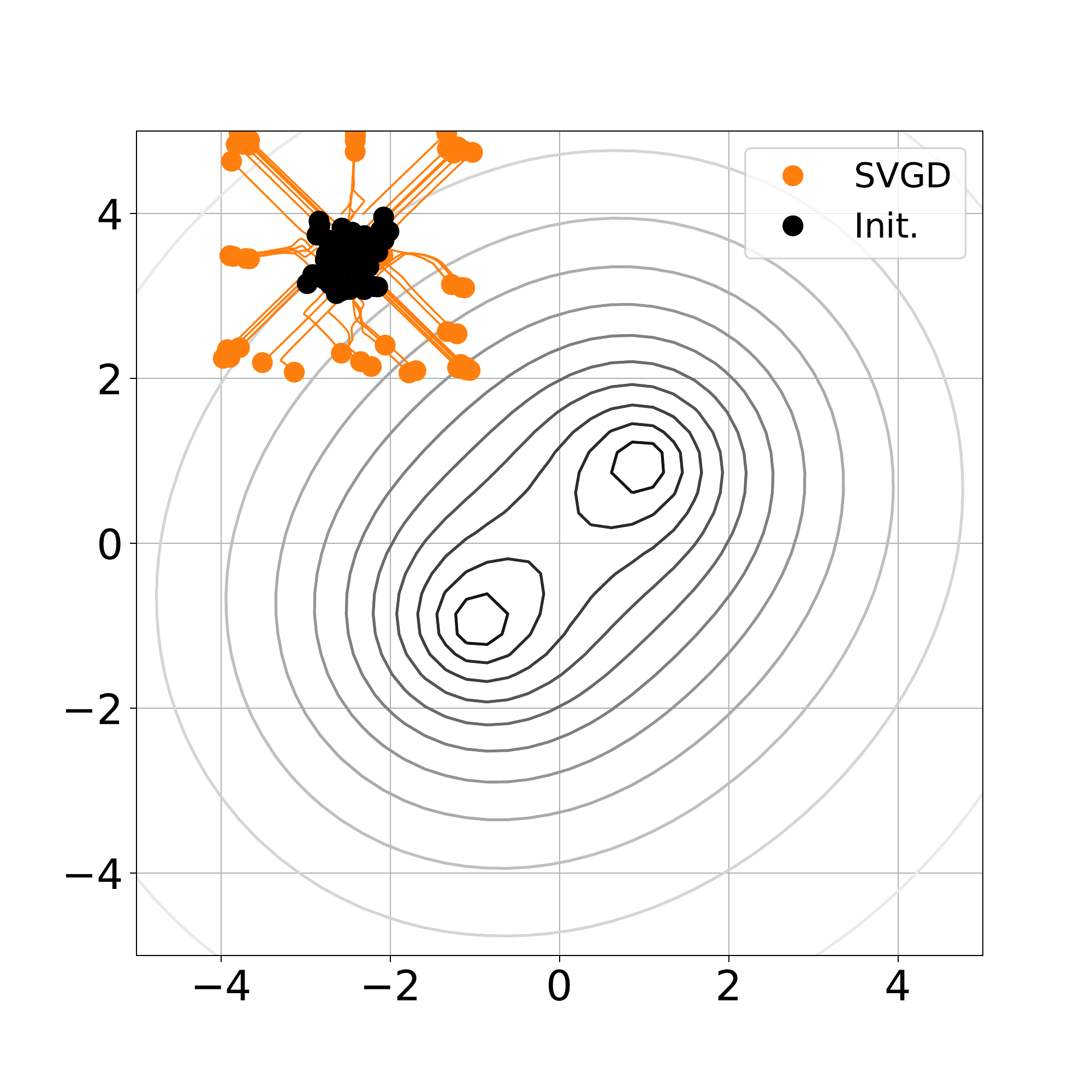}
    \includegraphics[width=0.23\textwidth]{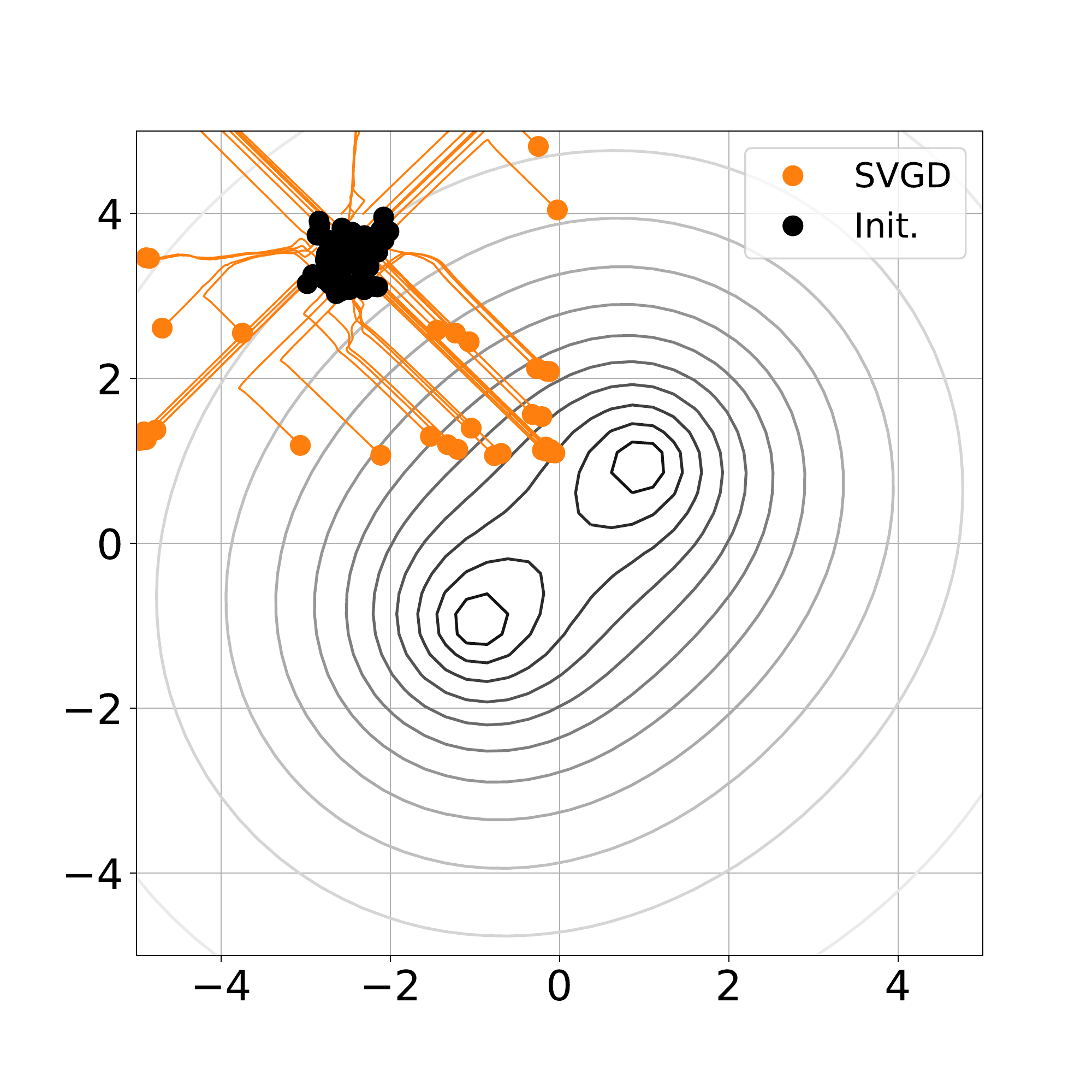}
    \caption{Top: \ref{eq:msvgd} after 100, 200, 1000, and 2000 iterations. Bottom: SVGD after 100, 200, 1000, and 2000 iterations.}
    \label{fig:lawgd_over_time}
\end{figure}

Additionally, we display the results from running SVGD with the RBF kernel and median-based bandwith on this example with a less favorable initialization. True samples from $\pi$ are displayed for comparison.  Both \ref{eq:msvgd} and SVGD are run for 20000 iterations with a constant step size. The samples from \ref{eq:msvgd} tend to move very fast from their initial positions and then tend to settle into their final positions as seen in Figure~\ref{fig:gaussmix2d_2}. On the other hand, with constant step size, the samples of SVGD do not seem to converge, and one must use a decreasing step size scheme in order for the particles to stabilize. We also note that many of the samples generated by SVGD tend to blow up with a constant step size.

In Figure~\ref{fig:lawgd_over_time}, we plot the particles of \ref{eq:msvgd} and SVGD at iterations 100, 200, 1000, and 2000 to compare the speed of convergence.

\bibliographystyle{aomalpha}
\bibliography{ref}

\end{document}